
\documentclass[final,leqno]{siamltex}
\usepackage{graphicx}

\title{On the Optimal General Convergence  Rates for Quadratures Derived from Chebyshev Points\thanks{This work was
supported by National Science Foundation of China (No. 11371376)}}

\author{Shuhuang Xiang\thanks{Department of Applied
Mathematics and Software, Central South University, Changsha, Hunan
410083, P. R. China. Email: xiangsh@mail.csu.edu.cn.}}

\begin{document}
\maketitle

\begin{abstract}
In this paper, we study the optimal general convergence rates for quadratures derived from Chebyshev points. By building on the aliasing errors on
integration of Chebyshev polynomials, together with the asymptotic formulae on
the coefficients of Chebyshev expansions, new and optimal convergence rates for
$n$-point Clenshaw-Curtis, Fej\'{e}r's first and second quadrature rules are
established for Jacobi weights or Jacobi weights multiplied by $\ln((x+1)/2)$. The convergence orders are attainable for some functions of finite regularities.  In addition, by using  refined estimates on aliasing errors on integration of Chebyshev polynomials by Gauss-Legendre  quadrature, an improved convergence rate for Gauss-Legendre is given too.
\end{abstract}

\begin{keywords}Clenshaw-Curtis, Fej\'{e}r,  Gauss quadrature, Chebyshev
points, convergence rate, aliasing, Chebyshev expansion.
\end{keywords}

\begin{AMS}65D32, 65D30
\end{AMS}

\pagestyle{myheadings}
\thispagestyle{plain}
\markboth{Optimal Rates}{Gauss-type Quadrature and Quadratures derived from Chebyshev Points}

\section{Introduction}

The computation of integrals of the form of
\begin{equation}
I[f]=\int_{-1}^1w(x)f(x)dx
\end{equation}
is one of the oldest and most important issues in numerical analysis.
Quadrature formulae are usually derived from polynomial
interpolation by a finite sum
\begin{equation}{\displaystyle
I_n[f]=\sum_{j=1}^nw_jf(x_j),\quad x_j\in [-1,1]}.
\end{equation}

Among all interpolation type quadrature rules with $n$ nodes,
the Gauss-Christoffel formula, denoted by $I_n^G[f]$, has the highest accuracy of degree $2n-1$
(c.f. Davis and Rabinowitz \cite{Davis}, Gautschi \cite{Gautschi0}). Particularly, for Jacobi weight function $w(x)=(1-x)^{\alpha}(1+x)^{\beta}$ ($\alpha>-1$, $\beta>-1$), fast evaluation of the nodes and weights for the Gauss quadrature was
given by Glaser, Liu and Rokhlin \cite{Glaser} with $O(n)$ operations, which has been recently extended by both Bogaert, Michiels and Fostier
\cite{Bogaert}, and Hale and Townsend \cite{Hale}. A {\sc Matlab} file for computation of these nodes and weights can be found in {\sc Chebfun}
system \cite{Trefethen3}.

It has been observed for a long time that, in the case $w(x)\equiv 1$, for most integrands, $n$-point Gauss
and $n$-point Clenshaw-Curtis quadrature (denoted by $I_{n}^{C\texttt{-}C}[f]$) are about equally accurate (c.f. O'Hara
and Smith \cite{Hara}, Evans \cite{Evans} and Kythe and
Sch\"{a}ferkotter \cite{Kythe}. For more details, see Trefethen \cite{Trefethen1}).

This observation was made precise by Trefethen \cite{Trefethen1,Trefethen2}, by using new asymptotics on
the coefficients of Chebyshev expansions for functions of finite
regularity:
Suppose $f(x)$ satisfies a
Dini-Lipschitz condition on $[-1,1]$, then it has the following
uniformly convergent Chebyshev series expansion (c.f. Cheney \cite[p.
129]{Cheney})
\begin{equation}
f(x)=\sum_{j=0}^{\infty}{'}a_jT_j(x),
\end{equation}
where the prime denotes summation whose first term is halved,
$T_j(x)=\cos(j\cos^{-1}x)$ denotes the Chebyshev polynomial of
degree $j$, and the Chebyshev coefficient $a_j$ is defined by
\begin{equation}a_j=\frac{2}{\pi}\int_{-1}^{1}\frac{f(x)T_j(x)}{\sqrt{1-x^2}}dx,\quad
j=0,1,\ldots.
\end{equation}

Trefethen in \cite{Trefethen1,Trefethen2}\footnote{In \cite{Trefethen1,Trefethen2}, the quadrature error bound is considered for $(n+1)$-point Gauss and Clenshaw-Curtis quadrature.} showed that for an integer $k\ge 1$, if $f(x)$ has an
absolutely continuous $(k-1)$st derivative $f^{(k-1)}$ on $[-1,1]$
and a $k$th derivative $f^{(k)}$ of bounded variation
$V_k={\rm Var}(f^{(k)})<\infty$, then for each $j\ge k+1$,
\begin{equation}
|a_j|\le{\displaystyle\frac{2V_k}{\pi j(j-1)\cdots(j-k)}},
\end{equation}
and
\begin{equation}
{\frac{32V_k}{15k\pi
(2n-1-k)^k}\ge}\left\{\begin{array}{ll}|I[f]-I_{n}^G[f]|&\mbox{for all $n\ge k/2+1$}\\
|I[f]-I_n^{C\texttt{-}C}[f]|&\mbox{for all sufficiently large $n$}
\end{array}.\right.
\end{equation}

Chebyshev expansions are very useful tools for numerical analysis. Their convergence is guaranteed under rather general conditions and they often converge fast compared with other polynomial expansions (c.f. Fox and Parker \cite{Fox}, Hesthaven et al. \cite{Hesthaven}, Petras \cite{Petras} and Xiang \cite{Xiang1}). For example, it has been shown that
the coefficient $a_j$ of the Chebyshev expansion of $f$ decays a
factor of $\sqrt{j}$ faster than the corresponding coefficient of
the Legendre expansion, which is mentioned in
\cite[p. 17]{Fox} and Boyd \cite[p. 52]{Boyd}, and made precise in  \cite{Xiang1} and Wang and Xiang \cite{WangXiang}. Additionally, the quadrature errors of the Gauss and Clenshaw-Curtis can be represented by using the Chebyshev expansion, respectively, if $\sum_{j=1}^{\infty}a_j$ is absolutely convergent, as
\begin{equation}\quad\,\,\mbox{\small
${\displaystyle E_n^G[f]=I[f]-I_n^G[f]=\sum_{j=2n}^{\infty}a_jE_n^G[T_j],\, E_n^{C\texttt{-}C}[f]=I[f]-I_n^{C\texttt{-}C}[f]=\sum_{j=n}^{\infty}a_jE_n^{C\texttt{-}C}[T_j]}$.}
\end{equation}

A new convergence rate improved one further power of $n$ for
$n$-point Gauss and Clenshaw-Curtis  quadrature is
given in Xiang and Bornemann \cite{XiangBornemann}  for $f\in X^s$ ($s>0$), based on the
work of Curtis and Rabinowitz \cite{Curtis} and  Riess and Johnson \cite{Riess} from the early 1970s, and a refined estimate for
Gauss quadrature applied to Chebyshev polynomials due to Petras in 1995 \cite{Petras}. Here, we say $f\in X^s$ if the Chebyshev coefficient $a_j$ satisfies that $a_j=O(j^{-s-1})$ \cite{XiangBornemann}. Moreover, from \cite{Trefethen1,Trefethen2}, we see that if $f(x)$ has an
absolutely continuous $(k-1)$st derivative $f^{(k-1)}$ on $[-1,1]$
(if $k\ge 1$) and $V_k<\infty$ then $f\in X^k$. 

In this paper, along the way to  \cite{Curtis,Riess,Trefethen1,Trefethen2,XiangBornemann},  by using refined estimates on the aliasing errors about the integration of Chebyshev polynomials by Gauss quadrature, in Section 2, we will improve the convergence rate for $n$-point Gauss-Legendre quadrature for $f\in X^s$ as
$$
E_n^G[f]=\left\{\begin{array}{ll}
O(n^{-2s}),&0 < s <1\\
O(n^{-2}\ln n),&s=1\\
O(n^{-s-1}),& s>1
\end{array}.\right.
$$
In Section 3, we will present optimal general convergence rates for generalized $n$-point Clenshaw-Curtis quadrature, Fej\'{e}r's first and second rules for $f\in X^s$ for the following weights:
\begin{itemize}
\item for $w(x)=(1-x)^{\alpha}(1+x)^{\beta}$:
$$
E_n[f]=\left\{\begin{array}{ll}
O(n^{-s-1})&\mbox{if $\min(\alpha,\beta)\ge -\frac{1}{2}$}\\
O(n^{-s-2-2\min(\alpha,\beta)})&\mbox{if $-1<\min(\alpha,\beta)< -\frac{1}{2}$}\end{array},\right.
$$
\item for $w(x)=\ln((x+1)/2)(1-x)^{\alpha}(1+x)^{\beta}$:
$$
E_n[f]=\left\{\begin{array}{ll}
O(n^{-s-1})&\mbox{if $\beta> -\frac{1}{2}$}\\
O(n^{-s-2-2\beta}\ln n)&\mbox{if $-1<\beta\le -\frac{1}{2}$}\end{array}.\right.
$$
\end{itemize}
Without ambiguity, here $E_n[f]$ denotes the quadrature error of the $n$-point  Clenshaw-Curtis quadrature, Fej\'{e}r's first and second rules for function $f\in X^s$, respectively. It is worth noting that these convergence orders are attainable for some functions of finite regularities. Final remarks on comparison with the convergence rate of Gauss quadrature is included in Section 4.

\section{An improved error bound on the Gauss quadrature for $w(x)\equiv 1$} Let $x_k$ be the zeros of
the Legendre polynomial of degree $n$, ordered by $-1<x_1<x_2<\cdots<x_n<1$, and $w_k$ the corresponding weights in the $n$-point Gauss quadrature ($k=1,2,\ldots,n$).

Xiang and Bornemann \cite{XiangBornemann}  showed for $f\in X^s$ that
\begin{equation}\label{gausserror}
E_n^G[f]=\left\{\begin{array}{ll}
O(n^{-3s/2}),&0 < s <2\\
O(n^{-s-1}),& s\ge 2
\end{array},\right. \mbox{while  $E_n^{C\texttt{-}C}[f]=O(n^{-s-1})$ for $s>0$},
\end{equation}
by applying the asymptotic formulae for $x_k=-\cos\theta_k$ and $w_k$  for $1\le k\le \lfloor (n+1)/2\rfloor$\footnote{Here, $\lfloor (n+1)/2\rfloor$ denotes the integral part of $(n+1)/2$.}
\begin{eqnarray}\,\,\,\,\,\,
\theta_k&=&\phi_k+\frac{1}{2(2n+1)^2}\cot\phi_k+\delta_k, \quad \phi_k=\frac{4k-1}{4n+2}\pi \quad \mbox{(c.f. Gatteschi \cite{Gatteschi87})},\\
\,\,\,\,\,\,w_k&=&\frac{2\pi}{2n+1}\sin\phi_k\left(1-\frac{1}{2(2n+1)^2}\right)(1+\epsilon_k)\quad \mbox{(c.f. F\"{o}rster and Petras \cite{ForsterPetras93})},
\end{eqnarray}
where
\begin{equation}\label{asynodeweight}
\quad 0\le -\delta_k\le \frac{11\cos\phi_k}{8(2n+1)^4\sin^3\phi_k}, \quad -\frac{\cos^2\phi_k}{(2n+1)^4\sin^4\phi_k}\le \epsilon_k\le \frac{8}{(2n+1)^4\sin^4\phi_k},
\end{equation}
together with the error estimate given by Petras \cite{Petras} for $m=j(4n+2)+2r$
$$
|E_n^G[T_m]|=\left\{\begin{array}{ll}
\frac{2+O(mr/n^2)}{|4r^2-1|}+O(m^4/n^6) + O(m^2\log n/n^2),&-n < r < n\\
\frac{\pi}{2}+ O(m/n^2) +O(m^4/n^6)+O(m\log n/n^2),& r=\pm n
\end{array}.\right.
$$

By using the following refined estimates, we can get an improved convergence rate on the Gauss quadrature.

\begin{lemma} The aliasing and aliasing errors about the integration of Chebyshev polynomials by the $n$-point Gauss quadrature satisfy that for  $j\ge 1$,
\begin{eqnarray}\label{gausscheby}
\quad\quad  I_n^G[T_m]&=&\left\{\begin{array}{ll}(-1)^j\frac{2}{1-4r^2}+O(m/n^2),&\ m=j(4n+2)+2r, \quad -n < r < n\\
\pm(-1)^j\frac{\pi}{2}+O(m/n^2),&\  m=(2j-1)(2n+1)\pm 1\end{array},\right.\\
\quad\quad \quad  |E_n^G[T_m]|&=&\left\{\begin{array}{ll} \frac{2}{|4r^2-1|}+O(m/n^2),&\ m=j(4n+2)+2r,\quad -n < r < n\\
\frac{\pi}{2}+O(m/n^2),&\  m=(2j-1)(2n+1)\pm 1\end{array}.\right.
\end{eqnarray}
\end{lemma}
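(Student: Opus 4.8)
The plan is to evaluate $I_n^G[T_m]=\sum_{k=1}^n w_k T_m(x_k)$ by inserting the quoted node and weight asymptotics directly. Since $x_k=-\cos\theta_k$ and $T_m(-\cos\theta)=(-1)^m\cos(m\theta)$, the sum becomes $I_n^G[T_m]=(-1)^m\sum_{k=1}^n w_k\cos(m\theta_k)$; moreover $m$ is even in every listed case (both $j(4n+2)+2r$ and $(2j-1)(2n+1)\pm1$ are even), so $(-1)^m=1$ and the sign will be governed entirely by the sum. Writing $N=2n+1$, $\theta_k=\phi_k+\eta_k$ with $\eta_k=\frac{1}{2N^2}\cot\phi_k+\delta_k$, and $w_k=\frac{2\pi}{N}\sin\phi_k(1-\frac{1}{2N^2})(1+\epsilon_k)$, I would split the sum into a leading part, obtained by replacing $\theta_k$ with $\phi_k$ and $w_k$ with $\frac{2\pi}{N}\sin\phi_k$, plus perturbation parts carrying the factors $\eta_k$, $\epsilon_k$ and $\frac{1}{2N^2}$. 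The leading part $\frac{2\pi}{N}\sum_k\sin\phi_k\cos(m\phi_k)$ is exactly a Chebyshev-point (Fej\'er-type) quadrature applied to $T_m$, whose aliasing can be computed in closed form.

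Second, I would evaluate this leading sum. By the identity $\sin\phi_k\cos(m\phi_k)=\frac12[\sin((m+1)\phi_k)-\sin((m-1)\phi_k)]$ it reduces to the sums $\Sigma(\ell)=\sum_{k=1}^n\sin(\ell\phi_k)$, $\phi_k=\frac{(4k-1)\pi}{2N}$. A short computation shows the aliasing reduction $\Sigma(2jN+\ell')=(-1)^j\Sigma(\ell')$, and that for odd $\ell'$ with $|\ell'|<N$ the geometric series evaluates to the compact closed form $\Sigma(\ell')=\frac{1}{2\sin(\ell'\pi/(2N))}$. In the bulk case $m=j(4n+2)+2r=2jN+2r$ this gives a leading term $(-1)^j\frac{\pi}{N}[\Sigma(2r+1)-\Sigma(2r-1)]$, and expanding the sines to first order collapses it to $(-1)^j(\frac{1}{2r+1}-\frac{1}{2r-1})=(-1)^j\frac{2}{1-4r^2}$ with an error $O(n^{-2})$. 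In the edge case $m=(2j-1)N\pm1$ one of the two sums resonates: $\sin((2j-1)N\phi_k)=(-1)^j$ for every $k$, so that sum equals $n(-1)^j$ while the other stays $O(1)$, and $\frac{\pi}{N}\cdot n(-1)^j$ together with $n/N\to\frac12$ produces the $\pm(-1)^j\frac{\pi}{2}$ term.

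Third, I would bound every perturbation term by $O(m/n^2)$. Taylor expanding $\cos(m\theta_k)=\cos(m\phi_k)-m\eta_k\sin(m\phi_k)+O((m\eta_k)^2)$, the dominant correction comes from the $\frac{1}{2N^2}\cot\phi_k$ piece of $\eta_k$: since $\sin\phi_k\cot\phi_k=\cos\phi_k$, it equals $-\frac{m\pi}{N^3}\sum_k\cos\phi_k\sin(m\phi_k)$, and the trigonometric sum is $O(N)$, so this term is exactly of size $O(m/n^2)$. The remaining pieces — those carrying $\delta_k$, $\epsilon_k$, the factor $1-\frac{1}{2N^2}$, and the quadratic Taylor remainders — must each be shown to be of this order or smaller.

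The hard part will be the endpoint behaviour: as $\phi_k\to0,\pi$ the quantities $\cot\phi_k$, $\delta_k$ and $\epsilon_k$ blow up (their bounds carry $\sin^{-3}\phi_k$ and $\sin^{-4}\phi_k$), precisely where the Taylor expansion is least accurate. The resolution I would use is that every term is damped by the weight factor $\sin\phi_k$ and that $\sin\phi_k\ge c\,k/N$ near $k=1$, so the endpoint sums $\sum_k\sin^{-p}\phi_k$ grow only like $N^p\sum_k k^{-p}=O(N^p)$ for $p\ge2$; the attached prefactors $N^{-4}$ (from $\delta_k$ and $\epsilon_k$) and $N^{-2}$ then dominate, keeping each contribution $O(m/n^2)$ or better, although uniformity likely forces a mild restriction on the range of $m$ relative to $n$ for the quadratic remainders. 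Finally, because $m\ge2n$ in both cases the exact value $I[T_m]=\frac{2}{1-m^2}=O(n^{-2})$ is absorbed into the error, so $E_n^G[T_m]=I[T_m]-I_n^G[T_m]$ agrees with $-I_n^G[T_m]$ up to $O(m/n^2)$; taking absolute values of the results of the second step then yields the stated values $\frac{2}{|4r^2-1|}$ and $\frac{\pi}{2}$ for $|E_n^G[T_m]|$.
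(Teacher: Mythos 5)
Your route is genuinely different from the paper's, and most of it checks out: the closed form $\Sigma(\ell')=\frac{1}{2\sin(\ell'\pi/(2N))}$ for odd $|\ell'|<N$, the reduction $\Sigma(2jN+\ell')=(-1)^j\Sigma(\ell')$, and the resulting leading constants $(-1)^j\frac{2}{1-4r^2}$ and $\pm\frac{\pi}{2}$ are all correct (note $\frac{\pi n}{N}=\frac{\pi}{2}+O(n^{-1})$, which is admissible only because $m\ge 2n$ forces $O(n^{-1})\subseteq O(m/n^2)$ here). The paper avoids these trigonometric sums entirely: it splits the frequency as $m\theta_k=j(4k-1)\pi+h_k+2r\theta_k$, so that only the ``multiple of the period'' part is perturbed (producing $h_k$ of size proportional to $j(4n+2)$), while the residual $\cos(2r\theta_k)$ is kept exact and summed by the exactness of the Gauss rule for $T_{2|r|}$ (degree $<2n$), giving $(-1)^jI[T_{2|r|}]$ with no node asymptotics needed for the leading term; the edge case is reduced to $I_n^G[\sqrt{1-x^2}]$ and handled by a F\"orster--Petras estimate for convex integrands. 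Your approach buys independence from Gauss exactness and from that convexity result, at the price of having to evaluate and then perturb the trigonometric sums.

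The genuine gap is the one you flag yourself and then set aside: the quadratic Taylor remainder $O((m\eta_k)^2)$ is not controllable in the range where the lemma is actually used. In Theorem 2.2 the lemma is applied for $m$ up to $4n(n+1)$, and near the endpoint $\eta_1\sim\frac{1}{2N^2}\cot\phi_1\sim\frac{1}{3\pi N}$, so $m\eta_1\sim n$ is not even small --- the expansion $\cos(m\theta_k)=\cos(m\phi_k)-m\eta_k\sin(m\phi_k)+O((m\eta_k)^2)$ is invalid there, and even where valid the summed remainder is of order $m^2\log n/n^4$, which exceeds $m/n^2$ for $m\gtrsim n^2/\log n$. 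A ``mild restriction on the range of $m$'' is therefore not permissible. The repair is to drop the Taylor expansion altogether: write $\cos(m\theta_k)-\cos(m\phi_k)=-2\sin\frac{m\eta_k}{2}\sin\bigl(m\phi_k+\frac{m\eta_k}{2}\bigr)$ and bound it by $m|\eta_k|$; then $\sum_k\frac{2\pi}{N}\sin\phi_k\,m|\eta_k|\le\frac{\pi m}{N^3}\sum_k|\cos\phi_k|+\frac{2\pi m}{N}\sum_k\sin\phi_k|\delta_k|=O(m/n^2)$, since $\sin\phi_k|\delta_k|=O(n^{-2}k^{-2})$. In other words, the first-order contribution is already of the target size $O(m/n^2)$, so there is nothing to gain from expanding it, and no second-order remainder to control. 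This is exactly the role of the $\sin(h_k/2)$ identity in the paper's proof. With that substitution (and the routine bound $\sum_k|w_k-\frac{2\pi}{N}\sin\phi_k|=O(n^{-2})$ for the weight perturbation), your argument closes.
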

\begin{proof}
For the case $m=j(4n+2)+2r$ with $-n<r<n$ and $j\ge 1$: From (2.1), we have $$m\theta_k=j(4n+2)\theta_k+2r\theta_k=j(4k-1)\pi+h_k+2r\theta_k,$$ where
\begin{equation}\label{asyonhk}
h_k=\frac{j(4n+2)}{2(2n+1)^2}\cot\phi_k+j(4n+2)\delta_k,
\end{equation}
and get
$$\begin{array}{lll}
\cos m\theta_k&=&\cos (j(4k-1)\pi+h_k+2r\theta_k)\\
&=&(-1)^j\cos h_k\cos 2r\theta_k-(-1)^j\sin h_k \sin 2r\theta_k\\
&=&(-1)^j(1+\cos h_k-1)\cos 2r\theta_k-(-1)^j\sin h_k \sin 2r\theta_k\\
&=&(-1)^j\cos 2r\theta_k-2(-1)^j\sin \frac{h_k}{2}\sin\left(\frac{h_k}{2}+2r\theta_k\right),\end{array}
$$
which yields
\begin{equation}\mbox{\quad \quad\small ${\displaystyle
I_n^G[T_m]=\sum_{k=1}^n w_k\cos m\theta_k=(-1)^j
I[T_{|2r|}]-2(-1)^j\sum_{k=1}^n w_k \sin \frac{h_k}{2}\sin\left(\frac{h_k}{2}+2r\theta_k\right).}$}
\end{equation}
Furthermore, note that
\begin{equation}{\small\begin{array}{lll}
{\displaystyle\Big|\sum_{k=1}^n w_k \sin \frac{h_k}{2} \sin\left(\frac{h_k}{2}+2r\theta_k\right)\Big|}
&\le&{\displaystyle \sum_{k=1}^{\lfloor (n+1)/2\rfloor} w_k |h_k|}\\
&\le&{\displaystyle \sum_{k=1}^{\lfloor (n+1)/2\rfloor} w_k\left( \frac{j(4n+2)}{2(2n+1)^2}\cot\phi_k+j(4n+2)|\delta_k|\right).}\end{array}}
\end{equation}
From the estimate on $\delta_k$ (2.3) and using $\frac{2}{\pi}\phi_k\le \sin \phi_k\le \phi_k$ for $0<\phi_k\le \frac{\pi}{2}$, we obtain $$\delta_k=O(n^{-1}k^{-3}),$$ and applying an $O(n^{-1})$ bound on the weights from (2.2) or Szeg\"{o} \cite{Szego}, we obtain
\begin{equation}
\sum_{k=1}^{\lfloor (n+1)/2\rfloor} w_k j(4n+2)|\delta_k|=O(m/n^2)\sum_{k=1}^{\lfloor (n+1)/2\rfloor} \frac{1}{k^3}=O(m/n^2).
\end{equation}
Moreover, by (2.2) and (2.3), it is easy to  derive that $$w_k=\frac{2\pi}{2n+1}\sin\phi_k+O(n^{-2}k^{-1}),$$
which, together with the estimate $\cot \phi_k\le \frac{1}{\sin\phi_k}\le \frac{1}{\frac{2}{\pi} \phi_k}=\frac{(2n+1)}{4k-1}$, induces
\begin{equation}\begin{array}{lll}
&&{\displaystyle\sum_{k=1}^{\lfloor (n+1)/2\rfloor} w_k \frac{j(4n+2)\pi}{2(2n+1)^2}\cot \phi_k}\\
&=&{\displaystyle\sum_{k=1}^{\lfloor (n+1)/2\rfloor} \frac{j(4n+2)\pi^2}{(2n+1)^3}\cos \phi_k+\sum_{k=1}^{\lfloor (n+1)/2\rfloor}\frac{j(4n+2)O(n^{-2}k^{-1})}{2(2n+1)^2}\cot \phi_k}\\
&=&{\displaystyle O(m/n^2)\int_{0}^{\pi/2}\cos t dt+O(m/n^3)\sum_{k=1}^{\lfloor (n+1)/2\rfloor} \frac{1}{k^2}}\\
&=&{\displaystyle O(m/n^2)}.\end{array}
\end{equation}
Combining (2.9)-(2.11) derives ${\displaystyle\sum_{k=1}^n w_k \sin \frac{h_k}{2}\sin\left(\frac{h_k}{2}+2r\theta_k\right)=O(m/n^2)}$. Consequently, by (2.8)  we get (2.5), and then using $I[T_{2\ell}]=\frac{2}{1-4\ell^2}$ for $\ell\ge 0$ we get (2.6), in the case  $m=j(4n+2)+2r$ with $-n<r<n$ and $j\ge 1$.

For the case $m=(2j-1)(2n+1)\pm 1$: $\cos m\theta_k$ can be written by (2.1) as
$$\begin{array}{lll}
\cos m\theta_k&=&\cos ((2j-1)(2n+1)\theta_k\pm \theta_k)\\
&=&\cos\left(\frac{(2j-1)(4k-1)}{2}\pi+\widetilde{h}_k\pm \theta_k\right)\\
&=&(-1)^{j+1}\sin(\widetilde{h}_k\pm \theta_k)\\
&=&(-1)^{j+1}\sin \widetilde{h}_k\cos \theta_k\pm (-1)^{j+1}[1+(\cos(\widetilde{h}_k)-1)]\sin \theta_k\\
&=&\pm (-1)^{j+1}\sin \theta_k + 2(-1)^{j+1}\sin\frac{\widetilde{h}_k}{2}\cos \left(\frac{\widetilde{h}_k}{2}\pm \theta_k\right),\end{array}
$$
where  $$\widetilde{h}_k=\frac{(2j-1)(2n+1)}{2(2n+1)^2}\cot\phi_k+(2j-1)(2n+1)\delta_k.$$
By the same arguments as those for the estimate of $\sum_{k=1}^nw_k|h_k|$, similarly, we have $\sum_{k=1}^nw_k|\widetilde{h}_k|=O(m/n^{2})$ and then
\begin{equation}\begin{array}{lll}
I_n^G[T_m]&=&{\displaystyle\pm (-1)^{j+1}\sum_{k=1}^nw_k\sin \theta_k+2(-1)^{j+1}\sum_{k=1}^nw_k\sin\frac{\widetilde{h}_k}{2}\cos \left(\frac{\widetilde{h}_k}{2}\pm \theta_k\right)}\\
&=&{\displaystyle\pm  (-1)^{j+1}\sum_{k=1}^nw_k\sin (-\cos^{-1} x_k)+O(m/n^{2})}\\
&=&\pm  (-1)^{j}I_n^G[\sqrt{1-x^2}] +O(m/n^{2}).\end{array}
\end{equation}
Furthermore, from  F\"{o}rster and Petras \cite{ForsterPetras}, we find that
$$|I_n^G[\sqrt{1-x^2}]-I[\sqrt{1-x^2}]|=|2(I[g(x)]-I_n^G[g(x)])|\le 2\sin^2\frac{2\pi}{(2n+1)^2}$$
by setting $g(x)=-\frac{1}{2}\sqrt{1-x^2}$  and applying the fact that $g$ is convex on $[-1,1]$ with $g(-1)-2g(0)+g(1)=1$, 
which, together with $I[\sqrt{1-x^2}]=\frac{\pi}{2}$, $I[T_{2\ell}]=\frac{2}{1-4\ell^2}$ for $\ell\ge 0$ and (2.12), derives the desired results in the case $m=(2j-1)(2n+1)\pm 1$.
\end{proof}

\begin{theorem}
If $f \in X^s$, the error of the $n$-point Gauss quadrature has the rate
\begin{equation}\label{gaussnorder2}
E_n^G(f)=\left\{\begin{array}{ll}
O(n^{-2s}),&0 < s <1\\
O(n^{-2}\ln n),&s=1\\
O(n^{-s-1}),& s>1
\end{array}.\right.
\end{equation}
\end{theorem}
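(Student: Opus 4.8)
The plan is to start from the Chebyshev-series representation $E_n^G[f]=\sum_{m=2n}^{\infty}a_mE_n^G[T_m]$ and to bound it termwise by means of the lemma. First I would note that, since the Gauss-Legendre nodes and weights are symmetric about the origin, $E_n^G[T_m]=0$ for every odd $m$, so only even indices contribute. Writing each even $m$ in the centered form $m=j(4n+2)+2r$ with $-n\le r\le n$, the interior range $-n<r<n$ is controlled by the first alternative of the lemma and the two boundary residues $r=\pm n$, equivalently $m=(2j-1)(2n+1)\pm1$, by the second; these cases together exhaust all even $m\ge 2n$, so the lemma supplies $|E_n^G[T_m]|$ on the whole range of summation.

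The decisive point is that the refined bounds $|E_n^G[T_m]|=\frac{2}{|4r^2-1|}+O(m/n^2)$ and $|E_n^G[T_m]|=\frac{\pi}{2}+O(m/n^2)$ are only advantageous while $m$ is small compared with $n^2$; for larger $m$ I would fall back on the trivial uniform bound $|E_n^G[T_m]|\le |I[T_m]|+\sum_{k=1}^nw_k=O(1)$. Accordingly I would split the series at an index of order $n^2$. On the tail, the uniform bound together with $|a_m|=O(m^{-s-1})$ yields $\sum_{m\ge cn^2}|a_m|\,|E_n^G[T_m]|=O\left(\sum_{m\ge cn^2}m^{-s-1}\right)=O(n^{-2s})$, which already accounts for the dominant term when $0<s<1$.

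On the head $2n\le m\le cn^2$ I would handle the principal parts and the $O(m/n^2)$ remainders separately. For the principal parts, summing $\frac{2}{|4r^2-1|}$ over $r$ gives a convergent series, and since $m\ge c'jn$ within the block indexed by $j$ one obtains $\sum_{j}O\left((jn)^{-s-1}\right)=O(n^{-s-1})$; the $\frac{\pi}{2}$-terms of the boundary case contribute $O(n^{-s-1})$ as well. For the remainders the crucial estimate is $\sum_{2n\le m\le cn^2}|a_m|\,O(m/n^2)=O(n^{-2})\sum_{2n\le m\le cn^2}m^{-s}$, and the three regimes of the partial sum $\sum_{2n\le m\le cn^2}m^{-s}$, namely $O(n^{2-2s})$ for $s<1$, $O(\ln n)$ for $s=1$ and $O(n^{1-s})$ for $s>1$, produce exactly $O(n^{-2s})$, $O(n^{-2}\ln n)$ and $O(n^{-s-1})$ respectively. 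Combining head and tail then gives the stated trichotomy, the logarithm at $s=1$ arising solely from the harmonic partial sum.

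The step I expect to be the main obstacle is the bookkeeping that converts the per-mode estimates into the three sharp rates: one must choose the cutoff near $n^2$ so that the $O(m/n^2)$ remainders stay summable against $|a_m|$ while the tail is simultaneously controlled by the trivial bound, and one must check that the principal-part contribution $O(n^{-s-1})$ never dominates when $s<1$. Getting the borderline $s=1$ right, that is, isolating the single logarithmic factor from the divergent harmonic sum and confirming it is not absorbed into the $O(n^{-s-1})$ principal term, is the most delicate point.
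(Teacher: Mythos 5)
Your proposal is correct and follows essentially the same route as the paper: the Chebyshev-series representation with odd terms vanishing, the case split of Lemma 2.1 into $m=j(4n+2)+2r$ with $|r|<n$ and the boundary residues $m=(2j-1)(2n+1)\pm 1$, a cutoff at order $n^2$ (the paper uses $4n(n+1)$) beyond which the uniform bound on $|E_n^G[T_m]|$ gives the $O(n^{-2s})$ tail, and the separation of the principal parts (summing to $O(n^{-s-1})$) from the $O(m/n^2)$ remainders whose partial sum $n^{-2}\sum m^{-s}$ produces the three regimes. The only cosmetic difference is that the paper invokes the Brass--Petras bound $|E_n^G[T_m]|\le 32/15$ rather than the crude $|I[T_m]|+\sum_k w_k$ bound for the tail.
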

\begin{proof}
With $f\in X^s$, that is, $a_m = O(m^{-s-1})$ for some $s > 0$, we see that $${\displaystyle E^{G}_n [f]= \sum_{m=2n}^{\infty}a_{m}E^{G}_n [T_{m}]}$$ is uniformly and absolutely convergent since $a_m = O(m^{-s-1})$ and $|E_n^G[T_m]|\le\frac{32}{15}$ for $m\ge 4$ (c.f. Brass and Petras
\cite{Brass}). Then $E_n^G[f]$ can be estimated, by the asymptotics on $a_m$, estimates (2.6) on $|E_n^G[T_m]|$ and using $E_n^G[T_{2k+1}]=0$ for $k=0,1,\ldots$, as

$$\begin{array}{lll}
\Big|E_n^G(f)\Big|&=&{\displaystyle\Big|\sum_{j=1}^{n}\sum_{|r|<n}a_{j(4n+2)+2r}E_n^G(T_{j(4n+2)+2r})}\\
&&{\displaystyle +\sum_{j=1}^{n}a_{(2j-1)(2n+1)\pm 1}E_n^G(T_{(2j-1)(2n+1)\pm 1})+\sum_{m=4n(n+1)}^{\infty}a_mE_n^G[T_m]}\Big|\\
&=& O\left({\displaystyle  \sum_{j=1}^{n}\sum_{|r|<n}\frac{2/|4r^2-1|}{(j(4n+2)+2r)^{1+s}}+\sum_{j=1}^{n}\frac{\pi}{((2j-1)(2n+1)\pm 1)^{1+s}}}\right)\\
&&{\displaystyle  +O\left(\frac{1}{n^2}\right)\sum_{m=2n}^{4n(n+1)-1}\frac{1}{m^{s}} +O\left(\sum_{m=4n(n+1)}^{\infty}\frac{|E_n^G[T_m]|}{m^{1+s}}\right)}\\
&=&{\displaystyle O\left(\frac{1}{n^{1+s}}\right)+O\left(\frac{1}{n^2}\right)\sum_{m=2n}^{4n(n+1)-1}\frac{1}{m^{s}}+O\left(\sum_{m=4n(n+1)}^{\infty}\frac{1}{m^{1+s}}\right)}\\
&=&{\displaystyle O\left(\frac{1}{n^{1+s}}\right)+O\left(\frac{1}{n^2}\int_{2n}^{4n(n+1)-1}x^{-s}dx\right)+O\left(\frac{1}{n^{2s}}\right)},
\end{array}
$$
which leads to the desired result based up $0<s<1$, $s=1$ and $s>1$, respectively.
\end{proof}

{\sc Remark 1}. {\it   The convergence rate (2.13) is optimal for $s>1$, which is verified similarly with $f_s(x) = |x-0.3|^s\in X^s$ used in \cite{XiangBornemann} (see the right two columns in Figures 2.1-2.2, respectively). While for $f\in X^s$ with $0 < s \le 1$, the convergence rate (2.13) is better than that in \cite{XiangBornemann}. However, the numerical examples in  \cite{XiangBornemann} show that the $n$-point Gauss quadrature also enjoys the  same convergence rate $O(n^{-s-1})$ (see the left column in Figures 2.1-2.2, respectively). }

\begin{figure}[hpbt]
\centerline{\includegraphics[height=6cm,width=15cm]{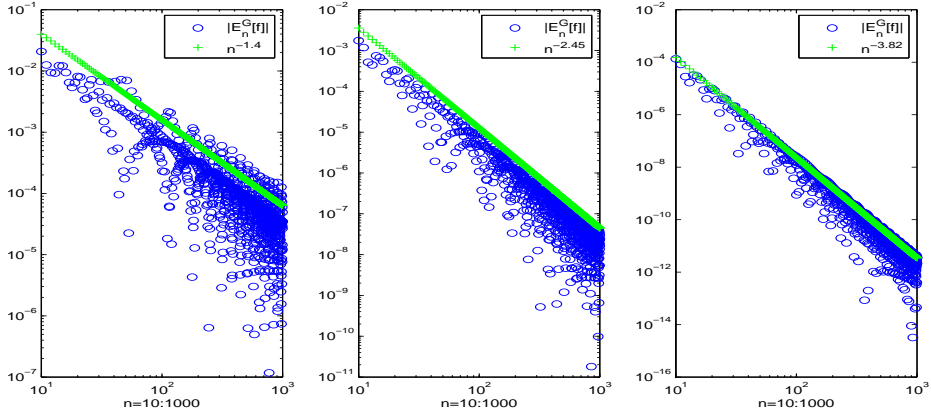}}
\caption{The absolute errors for $n$-point Gauss for $f(x)=|x-0.3|^s$ ($f\in X^s$) with $s=0.4,1.45, 2.82$, respectively: $n=10:1000$.}
\end{figure}

\begin{figure}[hpbt]
\centerline{\includegraphics[height=7cm,width=15cm]{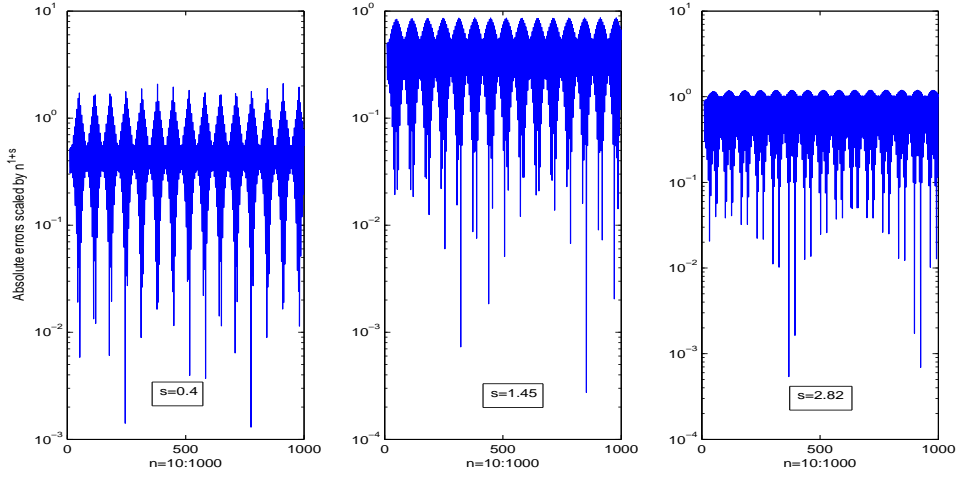}}
\caption{The absolute errors scaled by $n^{1+s}$ for $n$-point Gauss for $f(x)=|x-0.3|^s$ ($f\in X^s$) with $s=0.4,1.45, 2.82$, respectively: $n=10:1000$.}
\end{figure}

{\sc Remark 2}. {\it These techniques are difficult to be extended to study Gauss-Christoffel quadrature for general Jacobi weight functions.
However, following the ideas of Riess and Johnson \cite{Riess}, Trefethen \cite{Trefethen1,Trefethen2} and Xiang and Bornemann \cite{XiangBornemann}, the optimal general convergence rates for generalized $n$-point Clenshaw-Curtis quadrature, Fej\'{e}r's first and second rules are not difficult to be obtained.}


\section{Clenshaw-Curtis and Fej\'{e}r quadrature involving Jacobi weights}
 Fej\'{e}r \cite{Fejer} in 1933 suggested using the zeros of a Chebyshev polynomial of first or second
kind as interpolation points for quadrature rules of the form (1.2).  Here we consider the generalized  Fej\'{e}r and Clenshaw-Curtis quadrature. \textbf{Fej\'{e}r's first rule} uses the zeros of the Chebyshev polynomial $T_n(x)$ of the first kind (also called classic Chebyshev points \cite{Davis,Sloan2,Sloan3})
$$
 I_n^{F_1}[f]=\int_{-1}^1w(x)q_{n-1}^1(x)dx=\sum_{j=0}^{n-1}b_j^1\int_{-1}^1w(x)T_j(x)dx,
$$
where $q_{n-1}^1$ is the interpolation polynomial defined by
$$
\quad q_{n-1}^1(x)=\sum_{j=0}^{n-1}b_j^1T_j(x), \quad q_{n-1}^1(y_j)=f(y_j),\,\,\, y_j=\cos\left(\frac{(2j-1)\pi}{2n}\right),\, j=1,2,\ldots,n,
$$
while \textbf{Fej\'{e}r's second rule} uses the zeros of the Chebyshev polynomial $U_{n}(x)$ of the second
kind (also called Filippi points \cite{Davis})
$$
I_n^{F_2}[f]=\int_{-1}^1w(x)q_{n-1}^2(x)dx=\sum_{j=0}^{n-1}b_j^2\int_{-1}^1w(x)T_j(x)dx,
$$
where $q_{n-1}^2(x)$ is defined by
$$
\quad q_{n-1}^2(x)=\sum_{j=0}^{n-1}b_j^2T_j(x), \quad q_{n-1}^2(x_j)=f(x_j),\quad x_j=\cos\left(\frac{j\pi}{n+1}\right),\quad j=1,2,\ldots,n.
$$
\textbf{Clenshaw-Curtis quadrature} (c.f. Clenshaw-Curtis \cite{Clenshaw}) is to use the above Chebyshev points with  $n-1$ instead of $n+1$ including the endpoints $-1$ and $1$\footnote{This set of points are also called Clenshaw-Curtis points, Chebyshev extreme points or practical Chebyshev points \cite{Davis,Trefethen1,Sloan2,Sloan3}.}:
$$
I_n^{C\texttt{-}C}[f]=\int_{-1}^1w(x)q_{n-1}^3(x)dx=\sum_{j=0}^{n-1}b_j^3\int_{-1}^1w(x)T_j(x)dx,
$$
where $q_{n-1}^3$ is defined by
$$
\quad q_{n-1}^3(x)=\sum_{j=0}^{n-1}b_j^3T_j(x), \quad q_{n-1}^3(\overline{x}_j)=f(\overline{x}_j),\quad \overline{x}_j=\cos\left(\frac{j\pi}{n-1}\right),\,\, j=0,1,\ldots,n-1.
$$
The coefficients $b_j^i$ ($i=1,2,3$) in the above three interpolation polynomials can be fast computed  by FFT (c.f. Dahlquist and Bj\"{o}rck \cite{dahlq},  Trefethen \cite{Trefethen1}, Waldvogel \cite{Waldvogel} and Xiang et al. \cite{Xiang,Xiang3}).

 In addition, the modified moments $\int_{-1}^1w(x)T_j(x)dx$ can be efficiently evaluated by recurrence formulae  for Jacobi weights or Jacobi weights multiplied by $\ln((x+1)/2)$ (c.f. Piessens and Branders \cite{Piessens}).

\begin{itemize}
\item $w(x)=(1-x)^{\alpha}(1+x)^{\beta}$:
The recurrence formulae for the evaluation of the modified moments
\begin{equation}\label{moment1}
  M_k(\alpha,\beta)=\int_{-1}^1w(x)T_k(x)dx,\quad w(x)=(1-x)^{\alpha}(1+x)^{\beta}
\end{equation}
are
\begin{equation}\label{compmoment1}\quad\,\, \mbox{\footnotesize
  $(\beta+\alpha+k+2)M_{k+1}(\alpha,\beta)+2(\alpha-\beta)M_k(\alpha,\beta)
  + (\beta+\alpha-k+2)M_{k-1}(\alpha,\beta)=0$}
\end{equation}
with
{\footnotesize$$
M_0(\alpha,\beta)=2^{\beta+\alpha+1}\frac{\Gamma(\alpha+1)\Gamma(\beta+1)}{\Gamma(\beta+\alpha+2)},\quad M_1(\alpha,\beta)=2^{\beta+\alpha+1}\frac{\Gamma(\alpha+1)\Gamma(\beta+1)}{\Gamma(\beta+\alpha+2)}\frac{\beta-\alpha}{\beta+\alpha+2}.
$$}
\noindent Furthermore, the asymptotic expression is given by using the asymptotic theory of Fourier coefficients (c.f. Lighthill \cite{Lighthill}) as
$$\label{asymoment1}{\small \begin{array}{lll}
  &&M_k(\alpha,\beta)\\&\sim&-2^{\beta-\alpha}\cos(\pi\alpha)\Gamma(2\alpha+2)[k^{-2-2\alpha}+O(k^{-2\alpha-4})]\\
  &&+  (-1)^{k+1}2^{\alpha-\beta}\cos(\pi\beta)\Gamma(2\alpha+2)[k^{-2-2\beta}+O(k^{-2\beta-4})],\quad k\rightarrow \infty.\end{array}}
$$
The forward recursion is perfectly numerically stable, except in two cases:
\begin{eqnarray}
            \alpha>\beta\quad \mbox{and\quad}\beta&=&-\frac{1}{2},\frac{1}{2},\frac{3}{2},\ldots, \\
                          \beta>\alpha\quad \mbox{and\quad}\alpha&=&-\frac{1}{2},\frac{1}{2},\frac{3}{2},\ldots.
                             \end{eqnarray}

\item $w(x)=\ln((x+1)/2)(1-x)^{\alpha}(1+x)^{\beta}$:
For
\begin{equation}\label{moment2}
  G_k(\alpha,\beta)=\int_{-1}^1\ln((x+1)/2)(1-x)^{\alpha}(1+x)^{\beta}T_k(x)dx,
\end{equation}
the recurrence formulae are
\begin{equation}\label{compmoment2}{\small\begin{array}{lll}
  &&(\beta+\alpha+k+2)G_{k+1}(\alpha,\beta)+2(\alpha-\beta)G_k(\alpha,\beta)\\
  &&\quad + (\beta+\alpha-k+2)G_{k-1}(\alpha,\beta)=2M_{k}(\alpha,\beta)-M_{k-1}(\alpha,\beta)-M_{k+1}(\alpha,\beta)\end{array}}
\end{equation}
with
{\small$$
G_0(\alpha,\beta)=-2^{\beta+\alpha+1}\Phi(\alpha,\beta+1),\quad G_1(\alpha,\beta)=-2^{\beta+\alpha+1}[2\Phi(\alpha,\beta+2)-\Phi(\alpha,\beta+1)],
$$}
where
$$
\Phi(\alpha,\beta)=B(\alpha+1,\beta)[\Psi(\alpha+\beta+1)-\Psi(\beta)],
$$
$B(x,y)$ is the Beta function and $\Psi(x)$ is the Psi function (c.f. Abramowitz and Stegun \cite{Abram}). Additionally, the asymptotic expression is given by using the asymptotic theory of Fourier coefficients   as
$$\label{asymoment2}\begin{array}{lll}
 && G_k(\alpha,\beta)\\
  &\sim &(-1)^{k+1}2^{\alpha-\beta+1}\cos(\pi\beta)\Gamma(2\beta+2)k^{-2-2\beta}[-\ln 2k+\Psi(2\beta+2)\\
  &&-\frac{\pi}{2}\tan\pi\beta]-2^{\beta-\alpha-2}\cos(\pi\alpha)\Gamma(2\alpha+4)k^{-4-2\alpha},\quad k\rightarrow \infty.\end{array}
$$
The forward recursion is also perfectly numerically stable the same as that for (3.2). For more details, see Piessens and Branders \cite{Piessens}.
\end{itemize}

\vspace{0.36cm}
The convergence for the generalized $n$-point Clenshaw-Curtis quadrature, Fej\'{e}r's first and second rules, for
$$
I[f]=\int_{-1}^1k(x)f(x)dx
$$
with $\int_{-1}^1|k(x)|^pdx<\infty$ for some $p>1$, has been extensively studied in Elliott and Paget \cite{Elliott}, Sloan \cite{Sloan} and Sloan and Smith \cite{Sloan2,Sloan3}, etc. Taking into the Banach-Steinhaus (or uniform boundedness) theorem, using the convergence of Fourier series and Marcinkiewicz's inequality \cite[Vol. 2, pp. 28-30]{Marcinkiewicz}, Sloan \cite{Sloan} and Sloan and Smith \cite{Sloan2} showed that the
sums of the absolute values of the weights in (1.2) for the $n$-point Clenshaw-Curtis and Fej\'{e}r's first rule are uniformly bounded, i.e.
\begin{equation}
\lim_{n\rightarrow \infty}\sum_{j=1}^n|w_j|=\int_{-1}^1|k(x)|dx,
\end{equation}
and extended to the point set $\left\{\cos\left(\frac{2(i-1) \pi}{2n-1}\right)\right\}_{i=1}^{n}$. Identity (3.7) is also satisfied by $I_n^{F_2}[f]$.

\begin{lemma}
Suppose
$I[f]=\int_{-1}^1k(x)f(x)dx$ with $\int_{-1}^1|k(x)|^pdx<\infty$ for some $p>1$,
then the weights of $I_n^{F_2}[f]$ satisfy (3.7).
\end{lemma}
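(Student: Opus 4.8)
The plan is to obtain (3.7) for $I_n^{F_2}[f]$ by a two-sided squeeze on $\sum_{j=1}^n|w_j|$, following the framework of Sloan and Smith for Fej\'er's first rule. The starting point is the closed form of the weights. Writing the interpolant as $q_{n-1}^2=\sum_{i=1}^nf(x_i)\ell_i$ with the Lagrange fundamental polynomials $\ell_i$ for the Filippi points $x_i=\cos\theta_i$, $\theta_i=i\pi/(n+1)$ (the zeros of $U_n$), one reads off from $I_n^{F_2}[f]=\int_{-1}^1k(x)q_{n-1}^2(x)\,dx$ that
$$w_i=\int_{-1}^1k(x)\ell_i(x)\,dx,\qquad \ell_i(\cos\theta)\sin\theta=\frac{(-1)^{i+1}\sin^2\theta_i}{n+1}\,\frac{\sin((n+1)\theta)}{\cos\theta-\cos\theta_i}.$$
Thus everything is governed by the conjugate Dirichlet--type kernel $\sin((n+1)\theta)/(\cos\theta-\cos\theta_i)$, which concentrates near $\theta=\theta_i$ and decays only like $1/(\cos\theta-\cos\theta_i)$ away from it.

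\emph{Lower bound (soft direction).} The rule is exact for polynomials of degree $\le n-1$, so for a fixed polynomial $P$ and all large $n$ one has $\int_{-1}^1kP=\sum_iw_iP(x_i)$, whence $\big|\int_{-1}^1kP\big|\le(\max_i|P(x_i)|)\sum_i|w_i|$. Given $\varepsilon>0$, I would first pick a continuous $g$ with $|g|\le1$ and $\int_{-1}^1kg\ge\int_{-1}^1|k|-\varepsilon$ (an $L^1$-approximation of $\mathrm{sgn}\,k$, legitimate since $k\in L^1$), then a polynomial $P$ with $\|g-P\|_\infty$ small and $\max_{[-1,1]}|P|\le1+\varepsilon$ by Weierstrass. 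This gives $\liminf_n\sum_i|w_i|\ge\int_{-1}^1|k|$.

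\emph{Upper bound (the crux).} Here I would split each weight at the cell $I_i$ centered at $x_i$ (the cells being the midpoint intervals, so that the $I_i$ partition $[-1,1]$ up to endpoints):
$$|w_i|\le\int_{I_i}|k|\,|\ell_i|\,dx+\int_{[-1,1]\setminus I_i}|k|\,|\ell_i|\,dx.$$
Summing the near-diagonal terms and using that on $I_i$ the fundamental functions behave like a shrinking partition of unity ($|\ell_i|=O(1)$, $\sum_i\chi_{I_i}=1$, $|I_i|=O(1/n)$), the Lebesgue differentiation theorem yields $\sum_i\int_{I_i}|k|\,|\ell_i|\to\int_{-1}^1|k|$. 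The genuine difficulty is the far-field sum $\sum_i\int_{[-1,1]\setminus I_i}|k|\,|\ell_i|$: because of the slow $1/(\cos\theta-\cos\theta_i)$ decay this is a conjugate-function (Hilbert-transform) type quantity, and controlling it is exactly where the hypothesis $\int_{-1}^1|k|^p<\infty$ for some $p>1$ is indispensable. I would bound it by $\|k\|_p$ times an $\ell^q$-norm of the sampled kernel and invoke the $L^q$-boundedness of the conjugate operator together with Marcinkiewicz's inequality (equivalently, the mean convergence of Lagrange interpolation at the second-kind Chebyshev points), obtaining that the far-field contribution is $o(1)$. This gives $\limsup_n\sum_i|w_i|\le\int_{-1}^1|k|$, and combining the two bounds produces (3.7).

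I expect the main obstacle to be precisely this sharp constant in the upper bound. Mere \emph{uniform} boundedness of $\sum_i|w_i|$ follows from Marcinkiewicz's inequality alone (and already yields, via the Banach--Steinhaus theorem and exactness on polynomials, the convergence $I_n^{F_2}[f]\to I[f]$ for every $f\in C[-1,1]$), but extracting the \emph{exact} limit $\int_{-1}^1|k|$ requires the localization together with the $p>1$ singular-integral estimate. The only structural difference from Fej\'er's first rule is the factor $\sin^2\theta_i$ (equivalently the weight $\sqrt{1-x^2}$) attached to the second-kind nodes, for which the same mean-convergence theory of Lagrange interpolation is available; hence the argument of Sloan and Smith transfers, and (3.7) holds for $I_n^{F_2}[f]$ as claimed.
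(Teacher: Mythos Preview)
Your lower bound via exactness and Weierstrass is correct, but the upper-bound strategy has a genuine gap. Once you pass to $|w_i|\le\int_{I_i}|k|\,|\ell_i|+\int_{[-1,1]\setminus I_i}|k|\,|\ell_i|$ and sum over $i$, the far-field contribution becomes
\[
\sum_i\int_{[-1,1]\setminus I_i}|k|\,|\ell_i|\,dx=\int_{-1}^1|k(x)|\bigl(\Lambda_n(x)-|\ell_{i(x)}(x)|\bigr)\,dx,
\]
where $\Lambda_n=\sum_i|\ell_i|$ is the Lebesgue function of the Filippi nodes and $i(x)$ is the index with $x\in I_{i(x)}$. Since $\Lambda_n(x)$ is of order $\log n$ at almost every $x$ while $|\ell_{i(x)}(x)|=O(1)$, this far-field term is of order $(\log n)\int_{-1}^1|k|$, not $o(1)$. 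No appeal to Marcinkiewicz or to $L^q$-boundedness of the conjugate operator can repair this, because the oscillatory cancellation in $w_i=\int k\,\ell_i$ has already been destroyed by moving the absolute value inside the integral. The same obstruction would block Fej\'er's first rule if argued this way, so ``the argument of Sloan and Smith transfers'' is true, but not through the decomposition you have written.

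The paper's proof avoids the difficulty by never taking absolute values inside. With the odd $2\pi$-periodic extension $K(\theta)=\tfrac{\pi}{2}k(\cos\theta)$ for $0\le\theta\le\pi$, the moments $\overline b_j=\int_{-1}^1k\,U_{j-1}$ are exactly the Fourier sine coefficients of $K$, and one gets the closed form
\[
w_i=\frac{2\sin\theta_i}{n+1}\,S_n(\theta_i),\qquad S_n(\theta)=\sum_{j=1}^n\overline b_j\sin j\theta,
\]
so that $\sum_i|w_i|=\frac{2}{n+1}\sum_i|S_n(\theta_i)|\sin\theta_i$ is a discrete sample of $|S_n(\theta)|\sin\theta$. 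At this point the Sloan--Smith argument applies verbatim: Marcinkiewicz's inequality together with the $L^p$-convergence of Fourier partial sums (this is where $k\in L^p$, $p>1$, enters) gives the limit $\frac{2}{\pi}\int_0^\pi|K(\theta)|\sin\theta\,d\theta=\int_{-1}^1|k|$. The localization you were after is carried by $|S_n|\to|K|$ in $L^1$, with the absolute value taken only \emph{after} identifying $w_i$ as a value of the partial sum; that is what actually transfers from the first-kind case.
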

\begin{proof}
Since the weights of $I_n^{F_2}[f]$ can be represented as
$$
w_i=\frac{2\sin\theta_i}{n+1}\sum_{j=0}^{n-1}\sin((j+1)\theta_i)\overline{b}_{j+1},\quad \overline{b}_{i}=\int_{-1}^1k(x)U_{i-1}(x)dx,\quad i=1,2,\ldots,n
$$
(c.f. \cite{Sommariva}). Define an odd, $2\pi$-periodic function $K$ by
$$K(\theta)=\left\{\begin{array}{ll}\frac{\pi}{2}k(\cos \theta),&  0\le \theta\le \pi\\
-\frac{\pi}{2}k(\cos \theta),&  -\pi\le \theta <0\end{array}.\right.
$$
Then $\overline{b}_j$ has the form of
$$
\overline{b}_j=\frac{2}{\pi}\int_{0}^{\pi}K(\theta)\sin(j\theta)d\theta,\quad j=1,2,\ldots,n,
$$
which is the $j$th Fourier sine coefficient of $K$. In particular, the weight $w_i$ can be written as
$$
w_i=\frac{2\sin\theta_i}{n+1}\sum_{j=1}^{n}\sin(j\theta_i)\overline{b}_{j}=\frac{2}{n+1}S_n(\theta_i)\sin\theta_i,\quad i=1,2,\ldots,n,
$$
where $S_n(\theta)$ is the $n$th partial sum of the Fourier series for the function $K(\theta)$. Therefore, the sum of the absolute values of the weights becomes
$$
\sum_{j=1}^{n}|w_i|=\frac{2}{n+1}\sum_{j=1}^{n}|S_n(\theta_i)|\sin\theta_i,
$$
which, by directly following a similar proof to \cite{Sloan2}, establishes
$$
\lim_{n\rightarrow \infty}\sum_{j=1}^n|w_j|=\frac{2}{\pi}\int_{0}^{\pi}|K(\theta)|\sin \theta d\theta=\int_{-1}^1|k(x)|dx.
$$
\end{proof}

\begin{theorem}
Suppose $I[f]=\int_{-1}^1k(x)f(x)dx$ with $\int_{-1}^1|k(x)|^pdx<\infty$ for some $p>1$,
then  $\lim_{n\rightarrow \infty}I_n^{F_2}[f]=I[f]$  for all continuous functions in $[-1,1]$.
\end{theorem}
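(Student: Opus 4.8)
The plan is to deduce the result from the classical P\'olya--Steklov convergence principle (a Banach--Steinhaus argument), which asserts that a family of interpolatory quadrature rules converges to $I[f]$ for every $f\in C[-1,1]$ as soon as two conditions hold: first, that the rules converge on a dense subset of $C[-1,1]$; and second, that the sums of the absolute values of the weights are uniformly bounded in $n$. I would take the polynomials as the dense subset (by Weierstrass) and verify the two conditions in turn.

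For the uniform boundedness I would simply invoke Lemma 3.1: since $\sum_{i=1}^n|w_i|\to\int_{-1}^1|k(x)|\,dx$ as $n\to\infty$, the quantities $\sum_{i=1}^n|w_i|$ form a convergent, hence bounded, sequence, so there is a constant $C$ with $\sup_n\sum_{i=1}^n|w_i|\le C<\infty$. Here the limiting integral is finite because $\int_{-1}^1|k|^p\,dx<\infty$ with $p>1$ on a bounded interval forces $k\in L^1[-1,1]$ by H\"older's inequality.

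For convergence on polynomials I would use the interpolatory nature of Fej\'er's second rule. Since $q_{n-1}^2$ is the unique polynomial of degree $\le n-1$ interpolating $f$ at the $n$ nodes $x_j$, for any fixed polynomial $p$ one has $q_{n-1}^2\equiv p$ whenever $n-1\ge\deg p$, and hence $I_n^{F_2}[p]=\int_{-1}^1k(x)p(x)\,dx=I[p]$ exactly for all sufficiently large $n$. Thus the rules are eventually exact on each fixed polynomial, so the first condition is immediate.

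Finally I would assemble the estimate. Given $f\in C[-1,1]$ and $\varepsilon>0$, choose by Weierstrass a polynomial $p$ with $\|f-p\|_\infty<\varepsilon$ and split
\[
\bigl|I[f]-I_n^{F_2}[f]\bigr|\le \bigl|I[f-p]\bigr|+\bigl|I[p]-I_n^{F_2}[p]\bigr|+\bigl|I_n^{F_2}[f-p]\bigr|.
\]
The first term is at most $\varepsilon\int_{-1}^1|k|\,dx$, the middle term vanishes once $n-1\ge\deg p$, and the last term is at most $\varepsilon\sum_{i=1}^n|w_i|\le C\varepsilon$. Letting $n\to\infty$ and then $\varepsilon\to0$ yields the claim. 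I expect the only genuine difficulty to lie in the uniform boundedness of the weight sums, which is precisely the content of Lemma 3.1; with that in hand, the remaining steps are the routine density and exactness arguments sketched above.
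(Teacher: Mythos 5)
Your proof is correct and follows essentially the same route as the paper: the paper's proof simply invokes Lemma 3.1 for the uniform boundedness of $\sum_{i}|w_i|$ and then cites Sloan and Smith for the standard P\'olya--Steklov (Banach--Steinhaus) argument, which is precisely the density-plus-exactness-on-polynomials reasoning you have written out in full. Your version just makes explicit the details that the paper delegates to the reference.
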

\begin{proof}
By Lemma 3.1, it directly  follows from \cite{Sloan2}.
 \end{proof}

\vspace{0.36cm}

Based upon these uniform boundedness, we see that for $f\in X^s$ with $s>0$,
$$
|E_n[f]|=|I[f]-I_n[f]|=\Big|\sum_{j=n}^{\infty}a_jE_n[T_j]\Big|\le \sum_{j=n}^{\infty}|a_j||E_n[T_j]|
$$
since $a_j=O(j^{-1-s})$ and $E_n[T_j]$ are uniformly bounded for $j\ge 0$, where $E_n$ denotes the error of the above these three $n$-point quadrature rules corresponding to the two Jacobi weight functions. Furthermore,  any rearrangement of the infinite sum $\sum_{j=n}^{\infty}|a_j||E_n[T_j]|$ converges to the same sum.

\vspace{0.36cm}

In the following, we will consider aliasing errors on the integration of the Chebyshev polynomials by these three quadrature rules, and derive the optimal general rate of convergence.

\vspace{0.26cm}
The computation of the aliasings by the Clenshaw-Curtis, Fej\'{e}r
first and second rules is much simple, which can be exactly computed from Fox and
Parker \cite[p. 67]{Fox}
{\small\begin{eqnarray}
\quad \quad & T_{2pn\pm j}(y_i)=\cos\left((2pn\pm
j)\frac{(2i+1)\pi}{2n}\right)=(-1)^p\cos\left(\frac{j(2i+1)\pi}{2n}\right)=(-1)^pT_j(y_i)
\\
& T_{2p(n+1)\pm j}(x_{\ell})=\cos\left((2p(n+1)\pm
j)\frac{\ell\pi}{n+1}\right)=\cos\left(\frac{j\ell\pi}{n+1}\right)=T_j(x_{\ell})
\end{eqnarray}}
for $i,j=0,1,\ldots,n-1$, $\ell=1,2,\ldots,n$  and $p=1,2,\ldots$ as
\begin{eqnarray}
 I_n^{F_1}[T_{2pn\pm j}]=(-1)^pI[T_{j}],\quad I_n^{F_1}[T_{2p(n-1)}]=0,\quad I_n^{F_2}[T_{2p(n+1)\pm j}]=I[T_{j}],
\end{eqnarray}
\begin{eqnarray}\quad I_n^{F_2}[T_{(2p+1)(n+1)\pm 1}]=I_n^{F_2}[T_{n}], \quad I_n^{F_2}[T_{(2p+1)(n+1)}]=I_n^{F_2}[T_{n+1}],
\end{eqnarray}
and
\begin{eqnarray}
I_n^{C\texttt{-}C}[T_{2p(n-1)\pm j}]&=I[T_{j}].
\end{eqnarray}

\begin{lemma}(Second mean value theorem for integration  \cite{Malik})
(i) If $G : [a, b] \rightarrow R$ is a positive monotonically decreasing function and $\phi : [a, b] \rightarrow R$ is an integrable function, then there exists a number $\zeta \in [a, b] $ such that
$$
\int_a^bG(x)\phi(x)dx=G(a+0)\int_a^{\zeta}\phi(x)dx, \quad G(a+0)=\lim_{x\rightarrow a^+}G(x).
$$

(ii) If $G : [a, b]\rightarrow R$ is a positive monotonically increasing function and $\phi : [a, b] \rightarrow R$ is an integrable function, then there exists a number $\zeta \in [a, b] $ such that
$$
\int_a^bG(x)\phi(x)dx=G(b-0)\int_{\zeta}^b\phi(x)dx, \quad G(b-0)=\lim_{x\rightarrow b^{-}}G(x).
$$

(iii) If $G : [a, b] \rightarrow R$ is a monotonic function and $\phi : [a, b] \rightarrow R$ is an integrable function, then there exists a number $\zeta \in [a, b] $ such that
$$
\int_a^bG(x)\phi(x)dx=G(a+0)\int_a^{\zeta}\phi(x)dx+G(b-0)\int_{\zeta}^b\phi(x)dx.
$$
\end{lemma}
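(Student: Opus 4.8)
The plan is to derive all three parts from part (i), the classical Bonnet form, and to reduce (i) itself to an intermediate-value argument. First I would set $\Phi(x)=\int_a^x\phi(t)\,dt$; since $\phi$ is integrable, $\Phi$ is continuous on $[a,b]$ and hence attains its minimum $m$ and maximum $M$ there. The whole of (i) then rests on the single two-sided estimate
$$G(a+0)\,m \le \int_a^b G(x)\phi(x)\,dx \le G(a+0)\,M.$$
Once this is in hand, the quotient $\frac{1}{G(a+0)}\int_a^b G\phi\,dx$ lies in $[m,M]$ (the degenerate case $G(a+0)=0$ forces $G\equiv 0$ on $(a,b]$ and makes both sides vanish), so applying the intermediate value theorem to the continuous function $\Phi$ produces a point $\zeta\in[a,b]$ with $\Phi(\zeta)$ equal to this quotient, which is exactly the assertion of (i).

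The main work, and the step I expect to be the genuine obstacle, is the two-sided estimate. I would obtain it by discretising: choose a partition $a=t_0<t_1<\cdots<t_N=b$, approximate $\int_a^b G\phi\,dx$ by the left-endpoint Riemann sum $\sum_{i=1}^N G(t_{i-1})\bigl(\Phi(t_i)-\Phi(t_{i-1})\bigr)$, and apply Abel summation to rewrite it, using $\Phi(t_0)=\Phi(a)=0$, as $G(t_{N-1})\Phi(t_N)+\sum_{i=1}^{N-1}\bigl(G(t_{i-1})-G(t_i)\bigr)\Phi(t_i)$. Because $G$ is positive and decreasing, every coefficient here is nonnegative, so replacing each $\Phi(t_i)$ by $m$ or by $M$ bounds the sum; the coefficients telescope to $G(t_0)=G(a)$, but after splitting off the single endpoint jump $G(a)-G(a+0)$, whose term carries the factor $\Phi(t_1)\to\Phi(a)=0$, the remaining coefficients sum to exactly $G(a+0)$. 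Letting the mesh tend to zero, the Riemann sums converge to $\int_a^b G\phi\,dx$ while the jump term drops out, leaving $G(a+0)\,m\le\int_a^b G\phi\,dx\le G(a+0)\,M$. This vanishing of the endpoint jump is precisely why the statement carries the one-sided value $G(a+0)$ rather than $G(a)$, and it is the delicate point that must be handled with care.

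With (i) in place, part (ii) follows by reflection: applying (i) to $\widetilde G(x)=G(a+b-x)$, which is positive and decreasing, together with $\widetilde\phi(x)=\phi(a+b-x)$, and then changing variables via $y=a+b-x$, converts $G(a+0)$ into $\widetilde G(a+0)=G(b-0)$ and the head integral $\int_a^{\zeta'}\widetilde\phi$ into the tail integral $\int_\zeta^b\phi$, which is the increasing form. Finally, part (iii) is deduced from (i) by the decomposition $H(x)=G(x)-G(b-0)$, which for decreasing $G$ is positive and decreasing with $H(b-0)=0$; applying (i) to $H$ gives $\int_a^b H\phi\,dx=(G(a+0)-G(b-0))\Phi(\zeta)$, and adding back $G(b-0)\int_a^b\phi\,dx=G(b-0)\Phi(b)$ and regrouping with $\Phi(\zeta)=\int_a^\zeta\phi$ and $\Phi(b)-\Phi(\zeta)=\int_\zeta^b\phi$ yields $G(a+0)\int_a^\zeta\phi+G(b-0)\int_\zeta^b\phi$, which is (iii). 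The same scheme runs for increasing $G$ using $H(x)=G(x)-G(a+0)$.
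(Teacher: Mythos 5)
The paper does not prove this lemma at all: it is stated as a known result (Bonnet/Weierstrass second mean value theorem) with a citation to Malik's textbook, so there is no in-paper argument to compare yours against. Your self-contained proof is the standard one and is essentially correct: the Abel-summation identity, the observation that the nonnegative coefficients telescope to $G(t_0)$, the isolation of the endpoint jump $G(a)-G(a+0)$ whose factor $\Phi(t_1)\to\Phi(a)=0$ (which is exactly why the one-sided limit $G(a+0)$ appears), the two-sided bound $G(a+0)m\le\int_a^bG\phi\le G(a+0)M$, and the intermediate value theorem applied to $\Phi$ all go through, as do the reflection argument for (ii) and the shift $H=G-G(b\mp 0)$ for (iii). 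Two small points deserve a sentence each if you write this out in full. First, the convergence of the sums $\sum_i G(t_{i-1})\int_{t_{i-1}}^{t_i}\phi$ to $\int_a^bG\phi$ is immediate for bounded (Riemann-integrable) $\phi$, via $\bigl|\sum_i\int_{t_{i-1}}^{t_i}(G(t_{i-1})-G(x))\phi(x)\,dx\bigr|\le\|\phi\|_\infty\cdot\mathrm{mesh}\cdot(G(a)-G(b))$; for merely Lebesgue-integrable $\phi$ you need a dominated-convergence argument using that $G$, being monotone, is continuous a.e. (In the paper's applications $\phi$ is bounded and continuous on the relevant intervals, so the simpler estimate suffices.) Second, in part (iii) the shifted function $H(x)=G(x)-G(b-0)$ can be negative at the single point $x=b$ (where $G(b)\le G(b-0)$); you should either redefine $H(b):=0$, which changes no integral and preserves monotonicity, or note that ``positive'' in (i) need only mean nonnegative on $(a,b]$, which is all your Abel-summation bound uses.
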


\begin{lemma}
\begin{itemize}
\item $w(x)=(1-x)^{\alpha}(1+x)^{\beta}$: The modified moment satisfies
\begin{equation}
M_m(\alpha,\beta)=O\left(\frac{1}{m^{2+2\overline{\min}(\alpha,\beta)}}\right),\quad m=1,2,\ldots.
\end{equation}
Moreover, the aliasing errors by the three quadrature rules for $m=2pn\pm j$, $2p(n+1)\pm j$ or $2p(n-1)\pm j$ with respect to $I_n^{F_1}[T_m]$, $I_n^{F_2}[T_m]$ and $I_n^{C\texttt{-}C}[T_m]$ for $p=1,2,\ldots$ and $j=0,1,2,\ldots,n-1$, respectively, satisfy
\begin{equation}
|E_n[T_m]|=|M_j(\alpha,\beta)|+O\left(\frac{1}{m^{2+2\overline{\min}(\alpha,\beta)}}\right),
\end{equation}
where $\overline{\min}(\alpha,\beta)$ is defined by
$$
\overline{\min}(\alpha,\beta)=\left\{\begin{array}{ll}
0&\mbox{if $\alpha=\beta=-\frac{1}{2}$}\\
\beta&\mbox{if $\alpha=-\frac{1}{2}$ and $\beta\not=-\frac{1}{2}$}\\
\alpha&\mbox{if $\alpha\not=-\frac{1}{2}$ and $\beta=-\frac{1}{2}$}\\
\min(\alpha,\beta)&\mbox{otherwise}\end{array}.\right.
$$

\item $w(x)=\ln((x+1)/2)(1-x)^{\alpha}(1+x)^{\beta}$: For $\beta\not=-\frac{1}{2}$, the modified moment satisfies
 \begin{equation}\quad G_{m}(\alpha,\beta)=O\left(\frac{\ln 2m}{m^{2+2\beta}}\right)+O\left(\frac{1}{m^{4+2\alpha}}\right),\quad m=1,2,\ldots.
\end{equation}
The aliasing errors by the three quadrature rules for $m=2pn\pm j$, $2p(n+1)\pm j$ or $2p(n-1)\pm j$ with respect to $I_n^{F_1}[T_m]$, $I_n^{F_2}[T_m]$ and $I_n^{C\texttt{-}C}[T_m]$ for $p=1,2,\ldots$ and $j=0,1,2,\ldots,n-1$, respectively, satisfy
\begin{equation}
|E_n[T_m]|=|G_j(\alpha,\beta)|+O\left(\frac{\ln 2m}{m^{2+2\beta}}\right)+O\left(\frac{1}{m^{4+2\alpha}}\right).
\end{equation}
Particularly, for $\beta=-\frac{1}{2}$, the term  $\frac{\ln 2m}{m^{2+2\beta}}$ in (3.15) and (3.16) is replaced by $\frac{1}{m^{2+2\beta}}$, respectively.
\end{itemize}
\end{lemma}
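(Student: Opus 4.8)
The plan is to split the statement into an analytic part and a combinatorial part. The analytic part is the decay of the modified moments, bounds (3.13) and (3.15); the combinatorial part is the passage to the aliasing errors (3.14) and (3.16), which I would obtain by inserting the moment bounds into the \emph{exact} aliasing identities (3.10)--(3.12) and using a reverse triangle inequality. Because the aliasing identities come purely from the node relations (3.8)--(3.9) and are independent of the weight, the three rules $I_n^{F_1}$, $I_n^{F_2}$ and $I_n^{C\texttt{-}C}$ and both weights can be handled in one stroke.

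For the moment bounds I would pass to the trigonometric form via $x=\cos\theta$, writing $M_k(\alpha,\beta)=\int_0^{\pi}(1-\cos\theta)^{\alpha}(1+\cos\theta)^{\beta}\sin\theta\,\cos(k\theta)\,d\theta$, and analyse the two endpoints separately. Near $\theta=0$ the integrand behaves like $\theta^{2\alpha+1}$ and near $\theta=\pi$ like $(\pi-\theta)^{2\beta+1}$, so an endpoint with algebraic exponent $\gamma$ contributes $O(k^{-\gamma-1})$; making this rigorous is exactly where Lemma 3.2 enters, applied to the monotone singular factor times the oscillatory remainder on each half-interval. The two endpoints thus give $O(k^{-2-2\alpha})$ and $O(k^{-2-2\beta})$, and the slower one dominates. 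The role of $\overline{\min}(\alpha,\beta)$ is to record the regularising effect at $\alpha=-\frac{1}{2}$ (resp. $\beta=-\frac{1}{2}$): there $2\alpha+1=0$, the full integrand is smooth at that endpoint, its leading algebraic contribution disappears, and the decay is governed by the opposite endpoint --- which is precisely why $\overline{\min}=\beta$ when $\alpha=-\frac{1}{2}$, and so on. A finite set of small indices is absorbed by the crude bound $|M_k|\le\int_{-1}^1|w|<\infty$, valid since $\alpha,\beta>-1$. For the logarithmic weight the same endpoint analysis applies to $G_k$: near $\theta=\pi$ the factor $\ln\frac{1+\cos\theta}{2}\sim2\ln(\pi-\theta)$ amplifies the $\beta$-endpoint by a logarithm, giving $O(\ln(2k)\,k^{-2-2\beta})$, while near $\theta=0$ the same factor vanishes like $\theta^{2}$, raising the effective exponent there to $2\alpha+3$ and yielding the logarithm-free $O(k^{-4-2\alpha})$. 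At $\beta=-\frac{1}{2}$ the $\beta$-endpoint exponent degenerates to $0$, and the elementary fact that $\int_0^{\delta}\ln\theta\,\cos(k\theta)\,d\theta=O(k^{-1})$ carries no extra $\ln k$; this is exactly the announced replacement of $\ln(2m)/m^{2+2\beta}$ by $1/m^{2+2\beta}$.

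For the aliasing errors I would argue uniformly over the three rules. Each is an $n$-point interpolatory rule, hence exact on polynomials of degree $\le n-1$; since $T_j$ has degree $j\le n-1$ this gives $I_n[T_j]=\int_{-1}^1 w(x)T_j(x)\,dx$, that is $M_j$ (or $G_j$). The node identities (3.8)--(3.9) then yield the exact aliasing relations (3.10)--(3.12), so that $I_n[T_m]=\pm M_j$, with sign $(-1)^p$ for $I_n^{F_1}$ and $+$ for $I_n^{F_2}$ and $I_n^{C\texttt{-}C}$. Therefore $E_n[T_m]=I[T_m]-I_n[T_m]=M_m\mp M_j$, and the reverse triangle inequality gives $\bigl|\,|E_n[T_m]|-|M_j|\,\bigr|\le|M_m|$. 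Bounding $|M_m|$ by (3.13) at index $m$ delivers $|E_n[T_m]|=|M_j|+O(m^{-2-2\overline{\min}(\alpha,\beta)})$, which is (3.14); replacing $M$ by $G$ and invoking (3.15) gives (3.16).

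I expect the main obstacle to be the endpoint case analysis behind the moment bounds rather than the aliasing bookkeeping, which is essentially algebraic once exactness and (3.8)--(3.12) are in hand. The delicate points are: tracking precisely when an endpoint regularises (the half-integer corrections encoded by $\overline{\min}$), and the degenerate balance at $\beta=-\frac{1}{2}$ in the logarithmic case, where the vanishing of the algebraic exponent removes the logarithmic amplification. I would also note that the genuinely excluded ``midpoint'' frequencies for $I_n^{F_2}$ --- those not representable as $2p(n+1)\pm j$ with $j\le n-1$ --- fall outside the stated range and are instead governed by the identities (3.11).
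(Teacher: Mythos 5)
Your proposal follows essentially the same route as the paper: the trigonometric substitution $x=\cos\theta$, a separate analysis of the two endpoint singularities using the second mean value theorem (the paper's Lemma 3.3, which you cite as 3.2) with the splitting point at distance $O(1/m)$ from the singular endpoint, the regularisation at half-integer exponents encoded by $\overline{\min}$, the $O(t^2)$ vanishing versus logarithmic amplification of $\ln\cos t$ at the two ends for the log weight, and the reduction of the aliasing errors to the exact identities (3.10)--(3.12) plus the moment asymptotics. The only difference is presentational --- the paper makes your heuristic ``exponent $\gamma$ contributes $O(k^{-\gamma-1})$'' rigorous by an explicit case analysis on $\min(1+2\alpha,1+2\beta)$ with integration by parts and induction, and it leaves the reverse-triangle-inequality step that you spell out implicit --- so the two arguments are the same in substance.
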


\begin{proof}
\begin{itemize}
\item $w(x)=(1-x)^{\alpha}(1+x)^{\beta}$: By setting $x=\cos\theta$, it follows
$$\begin{array}{lll}
M_m(\alpha,\beta)&=&\int_{-1}^1(1-x)^{\alpha}(1+x)^{\beta}T_m(x)dx\\
&=& 2^{\alpha+\beta+1}\int_0^{\pi}\sin^{1+2\alpha}\frac{\theta}{2}\cos^{1+2\beta}\frac{\theta}{2}\cos m\theta d\theta\\
&=& 2^{\alpha+\beta+2}\int_0^{\frac{\pi}{2}}\sin^{1+2\alpha}t\cos^{1+2\beta}t\cos 2mt dt.\end{array}
$$

\textbf{In the case $-1<\min(1+2\alpha, 1+2\beta)<0$}: Notice that
$$
\int_0^{\frac{\pi}{2}}\sin^{1+2\alpha}t\cos^{1+2\beta}t\cos 2mt dt=\left(\int_0^{\frac{\pi}{4}}+\int_{\frac{\pi}{4}}^{\frac{\pi}{2}}\right)\sin^{1+2\alpha}t\cos^{1+2\beta}t\cos 2mt dt,
$$
where the first term on the right hand side can be estimated by
\begin{equation}\begin{array}{lll}
&&\Big|\int_0^{\frac{\pi}{4}}\sin^{1+2\alpha}t\cos^{1+2\beta}t\cos 2mt dt\Big|\\
&\le& \int_0^{\frac{1}{2m}}\sin^{1+2\alpha}t\cos^{1+2\beta}tdt+|\int_{\frac{1}{2m}}^{\frac{\pi}{4}}\sin^{1+2\alpha}t\cos^{1+2\beta}t\cos 2mt dt|.\end{array}
\end{equation}
Then, by
$$0<\sin^{1+2\alpha}t\le  \max\left(1,(2/\pi)^{1+2\alpha}\right) t^{1+2\alpha},\quad 0<\cos^{1+2\beta}t\le  \max\left(1,\cos^{1+2\beta}1\right)$$
for $t\in (0,\frac{1}{2m}]$, the first term in (3.17) can be estimated as
\begin{equation}
\int_0^{\frac{1}{2m}}\sin^{1+2\alpha}t\cos^{1+2\beta}tdt=O\left(\int_0^{\frac{1}{2m}}t^{1+2\alpha}dt\right)=O(m^{-2-2\alpha}).
\end{equation}
Moreover, the second term in (3.17) $\int_{\frac{1}{2m}}^{\frac{\pi}{4}}\sin^{1+2\alpha}t\cos^{1+2\beta}t\cos 2mt dt$ can be estimated by (iii) of Lemma 3.3 as follows
$${\small\begin{array}{lll}
&&\int_{\frac{1}{2m}}^{\frac{\pi}{4}}\sin^{1+2\alpha}t\cos^{1+2\beta}t\cos 2mt dt\\
&=&\sin^{1+2\alpha}\frac{1}{2m}\int_{\frac{1}{2m}}^{\zeta}\cos^{1+2\beta}t\cos 2mt dt+\sin^{1+2\alpha}{\frac{\pi}{4}}\int_{\zeta}^{\frac{\pi}{4}}\cos^{1+2\beta}t\cos 2mtdt\\
\end{array}}
$$
for some $\zeta\in [\frac{1}{2m},\frac{\pi}{4}]$ by using the monotonicity of $\sin^{1+2\alpha}t$ on $[\frac{1}{2m},\frac{\pi}{4}]$. Applying (iii) of Lemma 3.3 again for $\cos^{1+2\beta}t$ on $[\frac{1}{2m},\frac{\pi}{4}]$, we obtain
$$\int_{\frac{1}{2m}}^{\zeta}\cos^{1+2\beta}t\cos 2mtdt=O(m^{-1}),\quad \int_{\zeta}^{\frac{\pi}{4}}\cos^{1+2\beta}t\cos 2mtdt=O(m^{-1})$$
and then we get $$\int_{\frac{1}{2m}}^{\frac{\pi}{4}}\sin^{1+2\alpha}t\cos^{1+2\beta}t\cos 2mt dt=O(m^{-2-2\alpha})+O(m^{-1}),$$
which, combining (3,17) and (3.18), yields
$$\int_0^{\frac{\pi}{4}}\sin^{1+2\alpha}t\cos^{1+2\beta}t\cos 2mt dt=O(m^{-2-2\alpha})+O(m^{-1}).$$

Similarly, by setting $u=\frac{\pi}{2}-t$, it yields $\int_{\frac{\pi}{4}}^{\frac{\pi}{2}}\sin^{1+2\alpha}t\cos^{1+2\beta}t\cos 2mt dt=O(m^{-2-2\beta})+O(m^{-1})$ and then
$M_m(\alpha,\beta)=O(m^{-2-2\min(\alpha,\beta)})$.

\vspace{0.26cm}
\textbf{In the case $\min(1+2\alpha, 1+2\beta)=0$}: Without loss of generality, assume $\beta=-\frac{1}{2}$. Then
$M_m(\alpha,\beta)=\int_0^{\frac{\pi}{2}}\sin^{1+2\alpha}t\cos 2mt dt$. The special case for $\alpha=\beta=-\frac{1}{2}$ follows directly from $M_m(-\frac{1}{2},-\frac{1}{2})=\int_{0}^{\pi}\cos mtdt=0$. In the other case, $M_m(\alpha,\beta)$ can be reduced to the case $-1<\min(1+2\alpha, 1+2\beta)<0$ by integrating by parts at most $\lfloor 1+2\alpha\rfloor$ times, and then (3.13) follows from a similar way for this case.

\vspace{0.26cm}
 Similarly,\textbf{ in the case $0<\min(1+2\alpha, 1+2\beta)\le 1$}, integrating by parts once follows the desired result. Thus, by induction we get (3.13) for $\min(1+2\alpha, 1+2\beta)>-1$.

\vspace{0.26cm}
Expression (3.14) directly follows from the aliasings (3.10-3.12) and the asymptotics on $M_m(\alpha,\beta)$.

\vspace{0.36cm}
\item $w(x)=\ln((x+1)/2)(1-x)^{\alpha}(1+x)^{\beta}$: Similarly, by setting $x=\cos\theta$ it follows
\begin{equation}\begin{array}{lll}
&&G_m(\alpha,\beta)\\&=& 2^{\alpha+\beta+3}\int_0^{\frac{\pi}{2}}\ln (\cos t)\sin^{1+2\alpha}t\cos^{1+2\beta}t\cos 2mt dt\\
&=& 2^{\alpha+\beta+3}\left(\int_0^{\frac{1}{2m}}+\int_{\frac{1}{2m}}^{\frac{\pi}{2}-{\frac{1}{2m}}}+\int_{\frac{\pi}{2}-{\frac{1}{2m}}}^{\frac{\pi}{2}}\right)\ln (\cos t)\sin^{1+2\alpha}t\cos^{1+2\beta}t\cos 2mt dt.\end{array}
\end{equation}

\vspace{0.26cm}
\textbf{In the case $-1<1+2\beta<0$}: By using $\ln (\cos t)=\ln (1-2\sin^2\frac{t}{2})=O(t^2)$, $\sin^{1+2\alpha}t=O(t^{1+2\alpha})$ and $\sin^{1+2\beta}t=O(t^{1+2\beta})$ for $t\in (0,\frac{\pi}{2}]$, we have the following estimates on the first and third terms in (3.19), respectively,
$$
\int_0^{\frac{1}{2m}}\ln (\cos t)\sin^{1+2\alpha}t\cos^{1+2\beta}t\cos 2mt dt=O\left(\int_0^{\frac{1}{2m}}t^{3+2\alpha}dt\right)=O(m^{-4-2\alpha})
$$
and
$$\begin{array}{lll}&&\int_{\frac{\pi}{2}-{\frac{1}{2m}}}^{\frac{\pi}{2}}\ln (\cos t)\sin^{1+2\alpha}t\cos^{1+2\beta}t\cos 2mt dt\\
&=&(-1)^m\int_0^{\frac{1}{2m}}\ln (\sin t)\cos^{1+2\alpha}t\sin^{1+2\beta}t\cos 2mt dt\\
&=&O\left(\Big|\int_0^{\frac{1}{2m}}u^{1+2\beta}\ln u du\Big|\right)\\
&=&O(m^{-2-2\beta}\ln 2m).\end{array}
$$
While for the second term in (3.19), integrating by parts we get
$$\begin{array}{lll}&&\int_{\frac{1}{2m}}^{\frac{\pi}{2}-{\frac{1}{2m}}}\ln (\cos t)\sin^{1+2\alpha}t\cos^{1+2\beta}t\cos 2mt dt\\
&=&\frac{1}{2m}\ln (\cos t)\sin^{1+2\alpha}t\cos^{1+2\beta}t\sin 2mt\Big |_{\frac{1}{2m}}^{\frac{\pi}{2}-{\frac{1}{2m}}}\\
&&-\frac{1}{2m}\int_{\frac{1}{2m}}^{\frac{\pi}{2}-{\frac{1}{2m}}}\left\{-\sin^{2+2\alpha}t\cos^{2\beta}t(1+(1+2\beta)\ln \cos t)\right.\\
&&\left.\quad\quad\quad\quad\quad\quad+(1+2\alpha)\ln (\cos t)\sin^{2\alpha}t\cos^{2+2\beta}t\right\}\sin 2mt dt\\
&=&O(m^{-2-2\beta}\ln 2m)+O(m^{-4-2\alpha})+Z_1-Z_2,\end{array}
$$
where
$$
Z_1=\frac{1}{2m}\int_{\frac{1}{2m}}^{\frac{\pi}{2}-{\frac{1}{2m}}}\sin^{2+2\alpha}t\cos^{2\beta}t(1+(1+2\beta)\ln \cos t)\sin 2mt dt
$$
can be estimated by (ii) of Lemma 3.3 for some $\eta\in \left[\frac{1}{2m},\frac{\pi}{2}-{\frac{1}{2m}}\right]$,
$$\sin^{2+2\alpha}t=O(1), \,\, \cos^{2\beta}t=O(m^{-2\beta}),\,\, 1+(1+2\beta)\ln \cos t=O(\ln 2m)$$
for $t\in \left[\frac{1}{2m},\frac{\pi}{2}-{\frac{1}{2m}}\right]$,
and  $$\int_a^b \sin 2mt dt=O(m^{-1}),\quad \forall a,b\in \left[0,\frac{\pi}{2}\right],$$ as
$$\begin{array}{lll}
Z_1&=&\frac{1}{2m}\sin^{2+2\alpha}t\cos^{2\beta}t(1+(1+2\beta)\ln \cos t)\Big|_{t=\frac{\pi}{2}-{\frac{1}{2m}}}\int_{\eta}^{\frac{\pi}{2}-{\frac{1}{2m}}}\sin 2mt dt\\
&=&O(m^{-2-2\beta}\ln 2m).\end{array}
$$
Similarly, we obtain
$$\begin{array}{lll}
Z_2:&=&-\frac{1}{2m}\int_{\frac{1}{2m}}^{\frac{\pi}{2}-{\frac{1}{2m}}}(1+2\alpha)\ln (\cos t)\sin^{2\alpha}t\cos^{2+2\beta}t\sin 2mt dt\\
&=&O\left(m^{-1}\int_{\frac{1}{2m}}^{\frac{\pi}{2}-{\frac{1}{2m}}}t^{2+2\alpha}\cos^{2+2\beta}t\sin 2mt dt\right)\\
&=&O(m^{-2}),\end{array}
$$
which together indicates $G_m(\alpha,\beta)=O(m^{-2-2\beta}\ln 2m)+O(m^{-4-2\alpha})$.

\vspace{0.26cm}
Particularly, in the case $\beta=-\frac{1}{2}$, $G_m(\alpha,-\frac{1}{2})$ can be estimated by (3.19) with $\frac{\pi}{4}$ instead of $\frac{1}{2m}$ as
\begin{equation}
\begin{array}{lll}
&&G_m(\alpha,-\frac{1}{2})\\&=&O(m^{-4-2\alpha})+2^{\alpha+5/2}\int_{\frac{\pi}{4}}^{\frac{\pi}{2}}\ln (\cos t)\sin^{1+2\alpha}t\cos 2mt dt\\
&=&O(m^{-4-2\alpha})+(-1)^m2^{\alpha+5/2}\int_0^{\frac{\pi}{4}}\ln (\sin t)\cos^{1+2\alpha}t\cos 2mt dt\\
&=&O(m^{-4-2\alpha})+O\left(\int_0^{\frac{\pi}{4}}\ln (t)\cos^{1+2\alpha}t\cos 2mt dt\right),
\end{array}
\end{equation}
where the second term in (3.20) can be estimated by
$$\begin{array}{lll}
&&\int_0^{\frac{\pi}{4}}\ln (t)\cos^{1+2\alpha}t\cos 2mt dt\\
&=&\frac{1}{2m}\ln(\frac{\pi}{4})\cos^{1+2\alpha}\frac{\pi}{4}\sin \frac{m\pi}{2}-\frac{1}{2m}\int_0^{\frac{\pi}{4}}\frac{\cos^{1+2\alpha}t}{t}\sin 2mt dt\\
&&+\frac{1+2\alpha}{2m}\int_0^{\frac{\pi}{4}}\ln(t)\sin t\cos^{2\alpha}t\sin 2mt dt\\
&=&O(m^{-1})-\frac{1}{2m}\int_0^{\frac{m\pi}{2}}\cos^{1+2\alpha}(\frac{u}{2m})\frac{\sin u}{u}du+O\left(m^{-1}\int_0^{\frac{\pi}{4}}|t\ln(t)|dt\right)\\
&=&-\frac{1}{2m}\left(\int_0^{\xi}\frac{\sin u}{u}du+\cos^{1+2\alpha}(\frac{\pi}{4})\int_{\xi}^{\frac{m\pi}{2}}\frac{\sin u}{u}du\right)+O(m^{-1})
\end{array}
$$
for some $\xi\in [0,\frac{m\pi}{2}]$, which, together with $\int_0^{+\infty}\frac{\sin u}{u}du=\frac{\pi}{2}$ and (3.20), implies
$G_m(\alpha,-\frac{1}{2})=O(m^{-4-2\alpha})+O(m^{-1})$.

\vspace{0.26cm}
For the general cases, applying  similar arguments as those for $w(x)=(1-x)^{\alpha}(1+x)^{\beta}$ gives the desired result (3.15) by induction.

\vspace{0.26cm}
Expression (3.16) directly follows from the aliasings and the asymptotics on $G_m(\alpha,\beta)$.
\end{itemize}
\end{proof}

\begin{theorem}
If $f \in X^s$ for $s>0$, the convergence of $n$-point Clenshaw-Curtis quadrature, Fej\'{e}r's first and second rules
has  the rate
\begin{itemize}
\item for $w(x)=(1-x)^{\alpha}(1+x)^{\beta}$:
\begin{equation}
E_n[f]=\left\{\begin{array}{ll}
O(n^{-s-1})&\mbox{if $\min(\alpha,\beta)\ge -\frac{1}{2}$}\\
O(n^{-s-2-2\min(\alpha,\beta)})&\mbox{if $-1<\min(\alpha,\beta)< -\frac{1}{2}$}\end{array};\right.
\end{equation}
\item for $w(x)=\ln((x+1)/2)(1-x)^{\alpha}(1+x)^{\beta}$:
\begin{equation}
E_n[f]=\left\{\begin{array}{ll}
O(n^{-s-1})&\mbox{if $\beta> -\frac{1}{2}$}\\
O(n^{-s-2-2\beta}\ln n)&\mbox{if $-1<\beta\le -\frac{1}{2}$}\end{array}.\right.
\end{equation}
\end{itemize}
\end{theorem}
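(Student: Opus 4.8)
The plan is to reduce everything to a single absolutely convergent series and then split it according to the two terms appearing in the aliasing-error estimates of Lemma 3.4. As observed just before the statement, $f\in X^s$ gives $a_m=O(m^{-1-s})$ and the three rules have uniformly bounded weights (Lemmas 3.1--3.2), so the representation $E_n[f]=\sum_{m=n}^{\infty}a_mE_n[T_m]$ is absolutely convergent and may be rearranged freely; hence it suffices to bound $\sum_{m=n}^{\infty}|a_m|\,|E_n[T_m]|$. For the Jacobi weight I would insert (3.14), writing $|E_n[T_m]|\le |M_{j(m)}(\alpha,\beta)|+O(m^{-2-2\overline{\min}(\alpha,\beta)})$, where $j(m)\in\{0,1,\dots,n-1\}$ is the reduced index attached to $m$ through the aliasing identities (3.10)--(3.12) (namely $m=2pn\pm j$ for $I_n^{F_1}$, $m=2p(n+1)\pm j$ for $I_n^{F_2}$, and $m=2p(n-1)\pm j$ for $I_n^{C\texttt{-}C}$). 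This splits the target into a leading part $\sum_m|a_m|\,|M_{j(m)}|$ and a remainder part $\sum_m|a_m|\,O(m^{-2-2\overline{\min}})$; the logarithmic cases (3.16) are treated identically with $G_j$ in place of $M_j$.

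For the remainder part I would use $a_m=O(m^{-1-s})$ and a tail-to-integral comparison,
$$\sum_{m\ge n}|a_m|\,O\!\left(m^{-2-2\overline{\min}}\right)=O\!\left(\sum_{m\ge n}m^{-3-s-2\overline{\min}}\right)=O\!\left(n^{-2-s-2\overline{\min}}\right),$$
the series converging because $\overline{\min}>-1$. In the logarithmic case the same computation yields $O(n^{-2-s-2\beta}\ln n)+O(n^{-4-s-2\alpha})$. In each regime this remainder turns out to be of order no larger than the leading part estimated below.

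The heart of the argument is the leading part. Here I would group by the reduced index, $\sum_m|a_m|\,|M_{j(m)}|=\sum_{j=0}^{n-1}|M_j(\alpha,\beta)|\sum_{p\ge1}\bigl(|a_{2pn+j}|+|a_{2pn-j}|\bigr)$ (and analogously for the periods $2(n+1)$ and $2(n-1)$). Since $a_m=O(m^{-1-s})$, the inner sum over $p$ is dominated by its $p=1$ term and is $O(n^{-1-s})$ uniformly in $j$, so the leading part is $O(n^{-1-s})\sum_{j=0}^{n-1}|M_j(\alpha,\beta)|$. Inserting $|M_j|=O(j^{-2-2\overline{\min}})$ from (3.13), I would separate three regimes: if $\overline{\min}>-\frac{1}{2}$ the partial sum of $|M_j|$ converges, giving $O(n^{-1-s})$; if $\overline{\min}=-\frac{1}{2}$ it is $O(\ln n)$; and if $-1<\overline{\min}<-\frac{1}{2}$ it grows like $n^{-1-2\overline{\min}}$, giving $O(n^{-s-2-2\overline{\min}})$. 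The translation back to $\min(\alpha,\beta)$ rests on the elementary observation that $\overline{\min}(\alpha,\beta)$ is never exactly $-\frac{1}{2}$ (a direct check of its four defining cases), which both collapses $\overline{\min}$ to $\min(\alpha,\beta)$ when $\min(\alpha,\beta)<-\frac{1}{2}$ and forces $\overline{\min}>-\frac{1}{2}$ when $\min(\alpha,\beta)\ge-\frac{1}{2}$; this is precisely what removes any $\ln n$ factor from (3.21). For the logarithmic weight the analogous sum $\sum_{j<n}|G_j|$ is governed by $\ln(2j)\,j^{-2-2\beta}$, which converges for $\beta>-\frac{1}{2}$, is $O(\ln n)$ at $\beta=-\frac{1}{2}$ (using the replaced estimate from Lemma 3.4), and grows like $n^{-1-2\beta}\ln n$ for $-1<\beta<-\frac{1}{2}$, thereby yielding (3.22).

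The step I expect to be the main obstacle is the bookkeeping in the leading part: checking that the partition $m\mapsto(p,j)$ induced by (3.10)--(3.12) covers every $m\ge n$ for each rule, and that the few exceptional aliasings — the odd multiples $m=(2p+1)n$ for $I_n^{F_1}$ and the terms in (3.11) for $I_n^{F_2}$ — are correctly absorbed into the remainder part, since for these $E_n[T_m]$ reduces to a modified moment of the required decay. Once this reindexing is carried out cleanly and uniformly for the three periods $2n$, $2(n+1)$ and $2(n-1)$, the three regime estimates combine mechanically to give (3.21) and (3.22).
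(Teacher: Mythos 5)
Your proposal follows essentially the same route as the paper's proof: represent $E_n[f]=\sum_{m\ge n}a_mE_n[T_m]$, insert the aliasing-error estimates of Lemma 3.4 to split off the reduced-index moment $|M_{j}|$ (or $|G_{j}|$) from the $O(m^{-2-2\overline{\min}})$ remainder, bound the sum over $p$ by $O(n^{-1-s})$ and the partial sums $\sum_{j<n}|M_j|$ by an integral in the three regimes, and treat the exceptional multiples $m=\ell n$ separately (the paper's $S_1$, $S_2$, $S_3$). The argument is correct, and your explicit observation that $\overline{\min}(\alpha,\beta)$ never equals $-\tfrac12$ — which is what suppresses the logarithm in (3.21) — is a point the paper leaves implicit.
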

\begin{proof}
Here we only prove (3.22) for $I_n^{F_1}[f]$ for $w(x)=\ln((x+1)/2)(1-x)^{\alpha}(1+x)^{\beta}$. Similar proofs can
be applied to prove (3.21) and other cases in (3.22).

With $f\in X^s$, we see that $${\displaystyle E^{F_1}_n [f]= \sum_{m=n}^{\infty}a_m E^{F_1}_n [T_m]}$$ is uniformly and absolutely convergent since $a_m = O(m^{-s-1})$ and $E^{F_1}_n [T_m]$ are uniformly bounded independent of $n$ and $m$. Moreover,  $E^{F_1}_n [f]$ can be estimated by
$$
|E^{F_1}_n [f]|\le \sum_{m=n}^{\infty}|a_m| |E^{F_1}_n [T_m]| = S_0+S_3,
$$
with
$$
S_0 ={\displaystyle\sum_{p=1}^{\infty}\sum_{0<|j|<n}|a_{2pn+j}||E_n^{F_1}[T_{\ell 2pn+j}]|},\quad\quad
S_3 ={\displaystyle\sum_{\ell=1}^{\infty}|a_{\ell n}||E_n^{F_1}[T_{\ell n}]|}.
$$
From the aliasing  (3.10), we find
$$\begin{array}{lll}
S_3 &=&{\displaystyle\sum_{\ell=1}^{\infty}|a_{\ell n}||E_n^{F_1}[T_{\ell n}]|}\\
&\le&{\displaystyle\sum_{p=1}^{\infty}\left\{|a_{2pn}| \cdot (|G_{2pn}(\alpha,\beta)|+|G_0(\alpha,\beta)|)+|a_{(2p-1)n}| \cdot (|G_{(2p-1)n}(\alpha,\beta)|+|I_n^{F_1}[T_n]|)\right\}}\\
&=&{\displaystyle\sum_{p=1}^{\infty}\frac{O(1)}{(2pn )^{s+1}}}\\
&=&{\displaystyle O(n^{-s-1})} \end{array}$$
since $G_{2pn}(\alpha,\beta)$, $G_{(2p-1)n}(\alpha,\beta)$ and $I_n^{F_1}[T_n]$ are uniformly bounded from (3.15).

Additionally, $S_0$ can be estimated by $S_0 \le S_1+S_2$  according to the aliasing errors (3.16)
$$
|E_n^{F_1}[T_m]|=|G_j(\alpha,\beta)|+O\left(\frac{\ln 2m}{m^{2+2\beta}}\right)+O\left(\frac{1}{m^{4+2\alpha}}\right)
$$
as follows with
$$\begin{array}{lll}
S_1 &=&{\displaystyle\sum_{p=1}^{\infty}\sum_{0<|j|<n}|a_{2pn+j}| \cdot |G_{|j|}(\alpha,\beta)|}\\
&=&{\displaystyle \sum_{p=1}^{\infty}\sum_{0<|j|<n}O\left(\frac{1}{(2pn +j)^{s+1}}\right)|G_{|j|}(\alpha,\beta)|}\\
&=&{\displaystyle\sum_{p=1}^{\infty}\frac{1}{p^{1+s}}}\cdot\left\{\begin{array}{ll}O(n^{-s-1})\int_1^n(x^{-2-2\beta}\ln x+x^{-4-2\alpha})dx&\mbox{if $\beta\not=-\frac{1}{2}$}\\
O(n^{-s-1})\int_1^n(x^{-1}+x^{-4-2\alpha})dx&\mbox{if $\beta=-\frac{1}{2}$}\end{array}\mbox{(by (3.15))}\right.\\
&=&\left\{\begin{array}{ll}O(n^{-s-1})&\mbox{if $\beta>-\frac{1}{2}$}\\
O(n^{-s-2-2\beta}\ln n)&\mbox{if $\beta\le-\frac{1}{2}$}\end{array}\right.\end{array}
$$
and
$$\begin{array}{lll}
S_2 &=&{\displaystyle\sum_{p=1}^{\infty}\sum_{0<|j|<n}|a_{2pn+j}|\cdot\left\{\begin{array}{ll}O\left(\frac{\ln2(2pn+j)}{(2pn+j)^{2+2\beta}}\right)
+O\left(\frac{1}{(2pn+j)^{4+2\alpha}}\right),&\mbox{if $\beta\not=-\frac{1}{2}$}\\
O\left(\frac{1}{(2pn+j)}\right)+O\left(\frac{1}{(2pn+j)^{4+2\alpha}}\right),&\mbox{if $\beta=-\frac{1}{2}$}\end{array}\right.}\\
&=&{\displaystyle\sum_{\mbox{$m\ge n$ and $m\not\in \Gamma$}}O(m^{-s-1})\cdot\left\{\begin{array}{ll}O\left(\frac{\ln(2m)}{m^{2+2\beta}}\right)+O\left(\frac{1}{m^{4+2\alpha}}\right),&\mbox{if $\beta\not=-\frac{1}{2}$}\\
O\left(\frac{1}{m}\right)+O\left(\frac{1}{m^{4+2\alpha}}\right),&\mbox{if $\beta=-\frac{1}{2}$}\end{array}\right.}\\
&=&{\displaystyle\left\{\begin{array}{ll}O\left(\frac{\ln(2n)}{n^{s+2+2\beta}}+\frac{1}{n^{s+4+2\alpha}}\right),&\mbox{if $\beta\not=-\frac{1}{2}$}\\
O\left(\frac{1}{n^{s+1}}\right),&\mbox{if $\beta=-\frac{1}{2}$}\end{array}\,(\Gamma=\{pn\,|\,p=1,2,\ldots\}),
\right.}\end{array}
$$
Combining these estimates, we obtain  (3.22) for the $n$-point Fej\'{e}r's first rule.
\end{proof}

\vspace{0.36cm}
The optimal general convergence rates of these three quadrature  rules can be verified by using $f(x)=|x-0.5|^s$ ($f\in X^s$ with $s>0$ not an even number). Figures 3.1-3.2 illustrate the convergence rates for $n$-point Clenshaw-Curtis, Fej\'{e}r's first and second rules for Jacobi weight $w(x)=(1-x)^{\alpha}(1+x)^{\beta}$ and $f(x)=|x-0.5|^{s}$ with $s=0.6$ and $s=1.6$, compared with $n^{-s-1}$ if $\min(\alpha,\beta)\ge -\frac{1}{2}$ , and $n^{-s-2-2\min(\alpha,\beta)}$ if $-1<\min(\alpha,\beta)< -\frac{1}{2}$, respectively.

Figures 3.3-3.4 show the convergence rates by these three $n$-point quadrature with the same functions for weight $w(x)=\ln((1+x)/2)(1-x)^{\alpha}(1+x)^{\beta}$, compared with $n^{-s-1}$ if $\min(\alpha,\beta)> -\frac{1}{2}$, and $n^{-s-2-2\beta}\ln(n)$ if $-1<\min(\alpha,\beta)\le -\frac{1}{2}$, respectively.

The numerical evidence shows that Clenshaw-Curtis and  Fej\'{e}r's first and second quadrature
are of approximately equal accuracy for these two weights, and the convergence rates (3.21) and (3.22) are attainable for some functions of finite regularities.

\begin{figure}[htbp]
\centerline{\includegraphics[height=8cm,width=15cm]{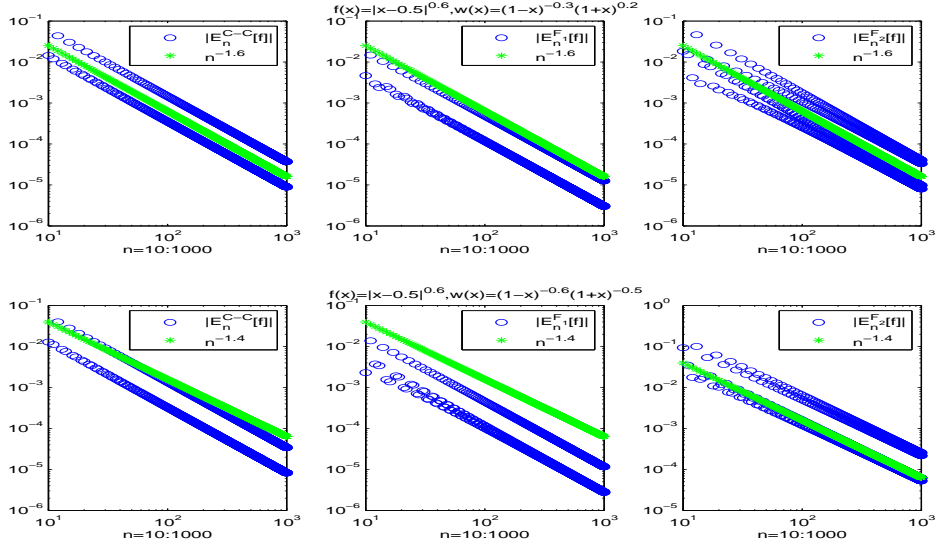}}
\caption{The absolute errors for $n$-point Clenshaw-Curtis, Fej\'{e}r's first and second rules for $f(x)=|x-0.5|^{0.6}$ ($f\in X^{0.6}$) and $w(x)=(1-x)^{\alpha}(1+x)^{\beta}$ with $\alpha=-0.3$ and $\beta=0.2$ ($1$st row), and $\alpha=-0.6$ and $\beta=-0.5$ ($2$nd row), compared with $n^{-1-0.6}$ and $n^{-0.6-2-2\min(-0.6,-0.5)}$, respectively, for $n=10:1000$.}
\end{figure}

\begin{figure}[htbp]
\centerline{\includegraphics[height=8cm,width=15cm]{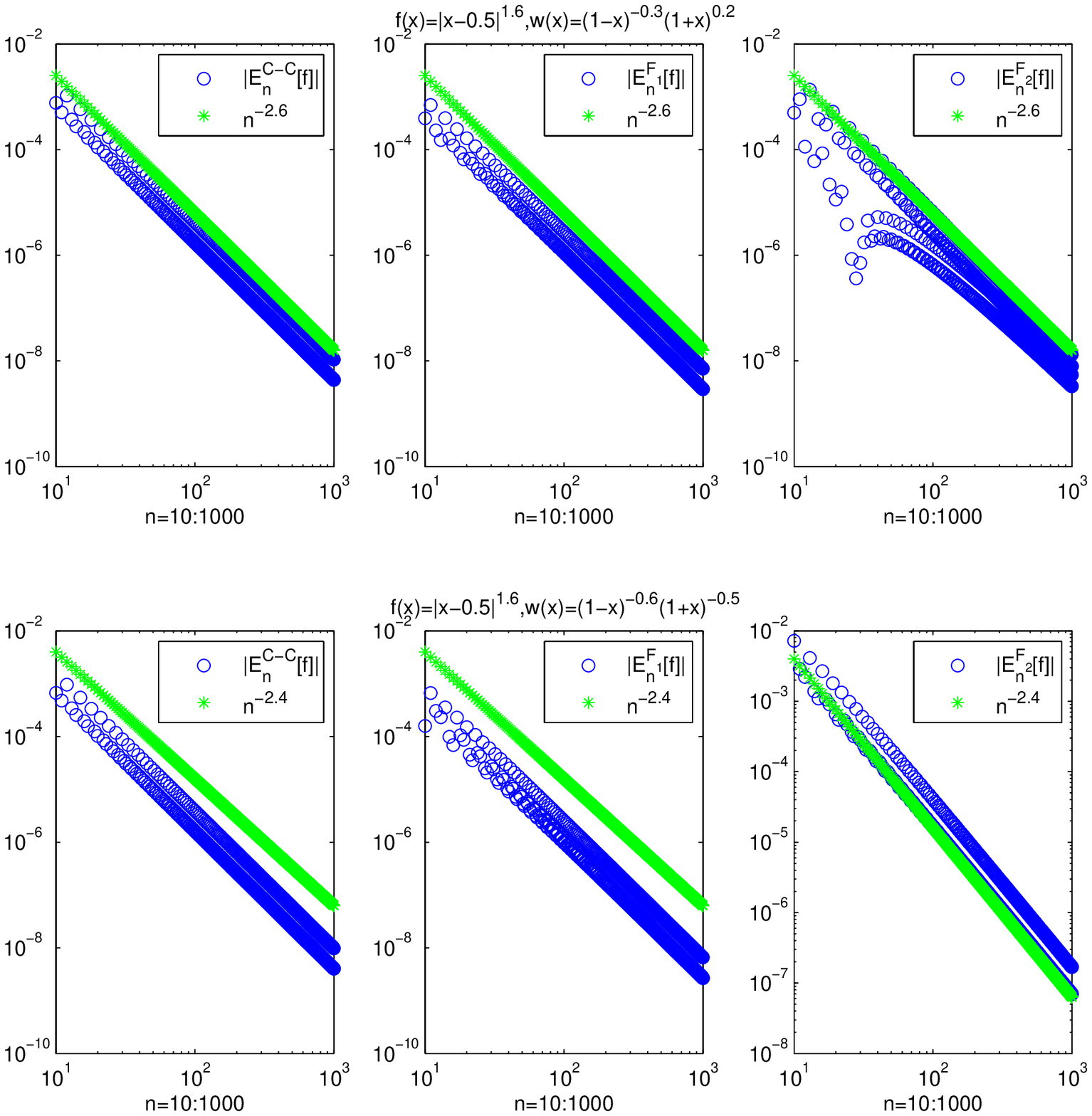}}
\caption{The absolute errors for $n$-point Clenshaw-Curtis, Fej\'{e}r's first and second rules for $f(x)=|x-0.5|^{1.6}$ ($f\in X^{1.6}$) and $w(x)=(1-x)^{\alpha}(1+x)^{\beta}$ with $\alpha=-0.3$ and $\beta=0.2$ ($1$st row), and $\alpha=-0.6$ and $\beta=-0.5$ ($2$nd row), compared with $n^{-1-1.6}$ and $n^{-1.6-2-2\min(-0.6,-0.5)}$, respectively, for $n=10:1000$.}
\end{figure}

\begin{figure}[htbp]
\centerline{\includegraphics[height=8cm,width=15cm]{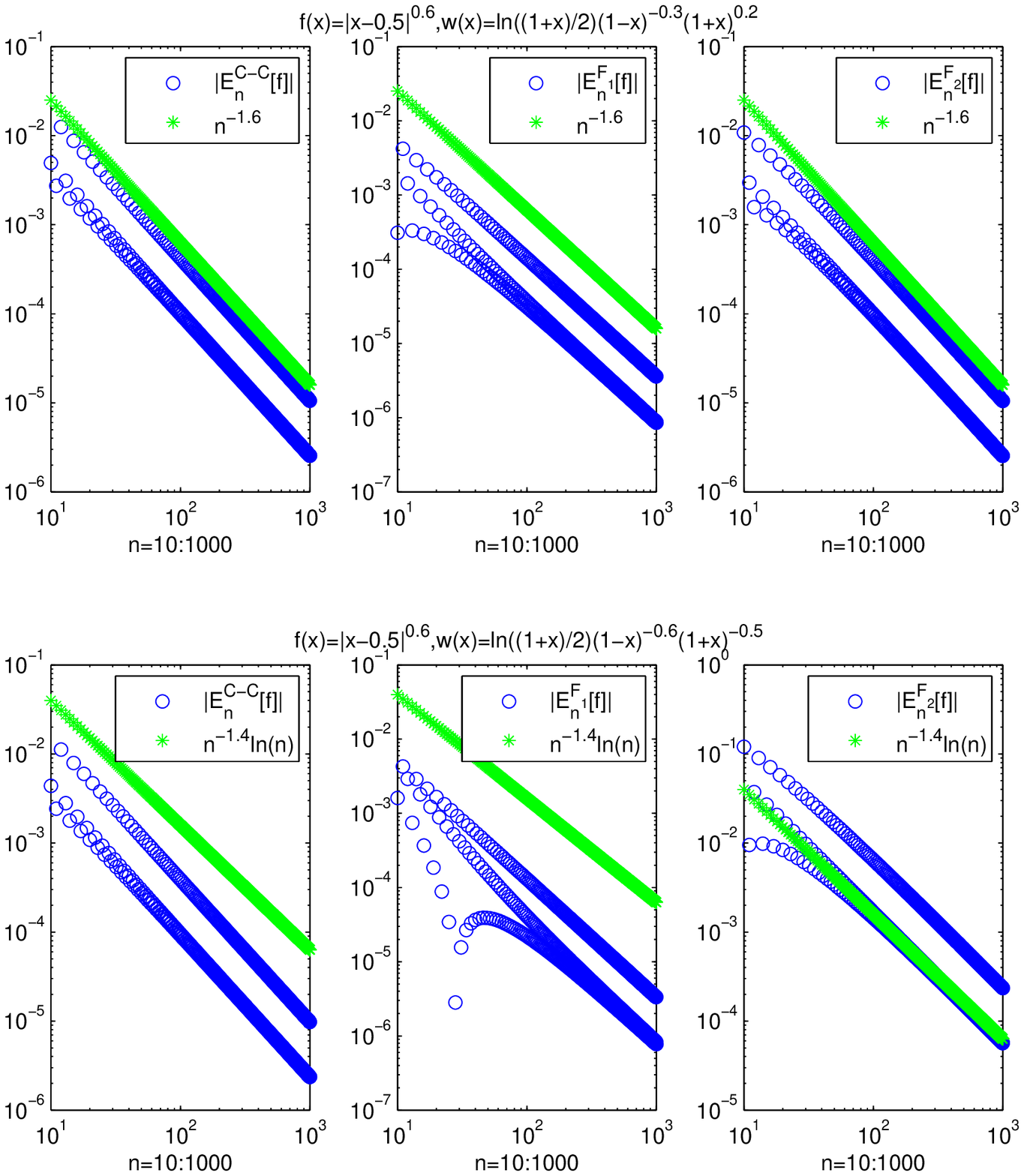}}
\caption{The absolute errors for $n$-point Clenshaw-Curtis, Fej\'{e}r's first and second rules for $f(x)=|x-0.5|^{0.6}$ ($f\in X^{0.6}$) and $w(x)=\ln((1+x)/2)(1-x)^{\alpha}(1+x)^{\beta}$ with $\alpha=-0.3$ and $\beta=0.2$ compared with $n^{-1-0.6}$ ($1$st row), and $\alpha=-0.6$ and $\beta=-0.5$ ($2$nd row), compared with $n^{-0.6-2-2\min(-0.6,-0.5)}\ln n$, respectively, for $n=10:1000$.}
\end{figure}

\begin{figure}[htbp]
\centerline{\includegraphics[height=8cm,width=15cm]{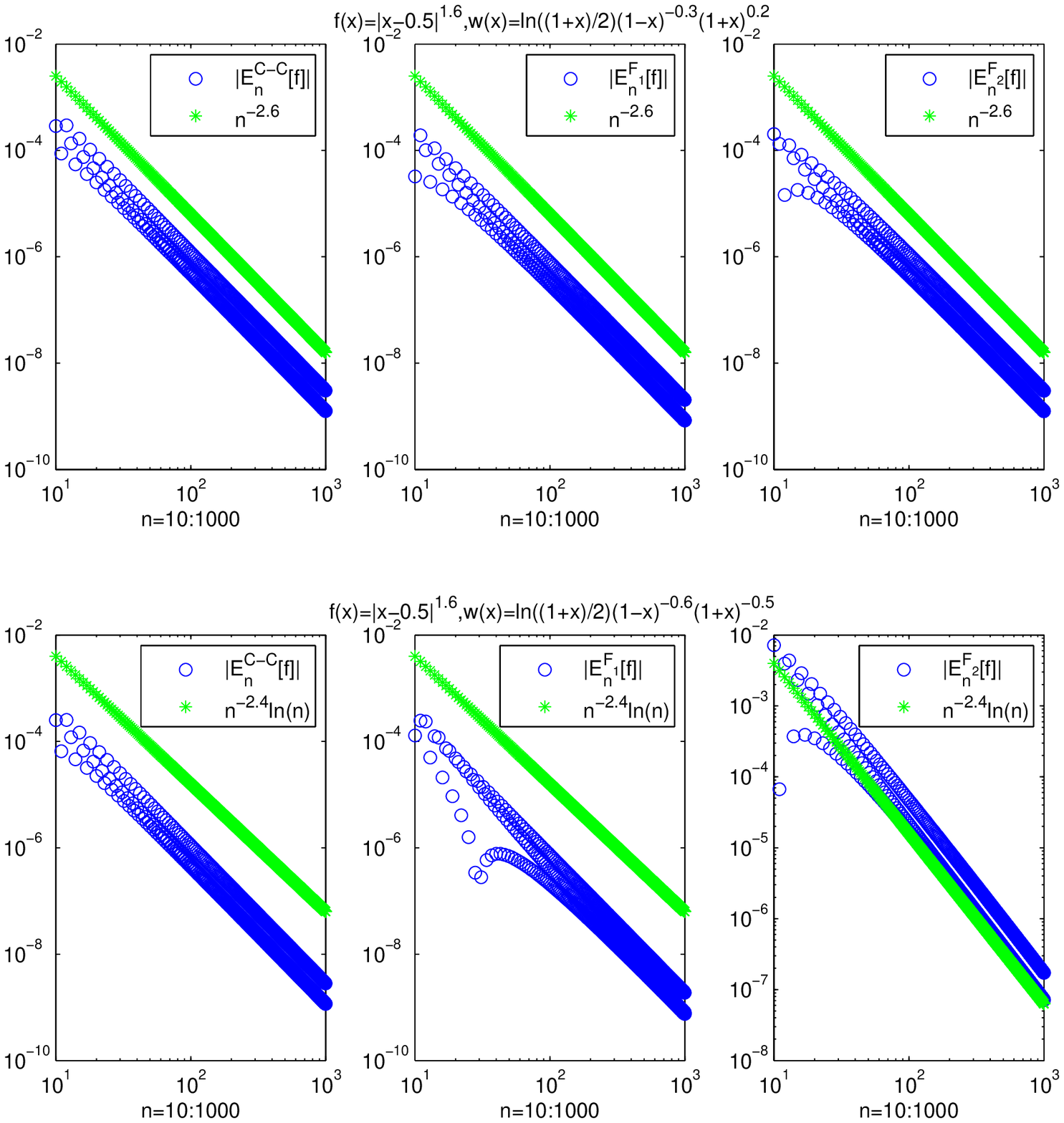}}
\caption{The absolute errors for $n$-point Clenshaw-Curtis, Fej\'{e}r's first and second rules for $f(x)=|x-0.5|^{1.6}$ ($f\in X^{1.6}$) and $w(x)=\ln((1+x)/2)(1-x)^{\alpha}(1+x)^{\beta}$ with $\alpha=-0.3$ and $\beta=0.2$ compared with $n^{-1-1.6}$ ($1$st row), and $\alpha=-0.6$ and $\beta=-0.5$ ($2$nd row), compared with $n^{-1.6-2-2\min(-0.6,-0.5)}\ln n$, respectively, for $n=10:1000$.}
\end{figure}



\section{Final remarks}
The Peano kernel theorem provides a most useful representation of the quadrature
error for the set of bounded variation functions (c.f. Brass \cite{Brass77}, Brass and Petras \cite{BrassPetras} and Davis and Rabinowitz \cite{Davis}). Based on the Peano kernel theorem and the estimates on the kernel function (c.f. Freud \cite{Freud}), Brass and Petras \cite{BrassPetras} obtained the error bound for any quadrature with positive quadrature weights (also see Diethelm \cite{Diethelm}).

\begin{theorem} (Brass and Petras \cite{BrassPetras})
Suppose $w(x)$ is a nonnegative and integrable weight
function satisfies
\begin{equation}\sup_{-1\le x\le 1}w(x)(1-x^2)^{1/2}<\infty, \end{equation}
and $E_n[{\cal P}_{n-1}]=0$ for any  positive interpolatory quadrature
formula $I_n$ with $n$ nodes\footnote{$E_n[{\cal P}_{n-1}]=0$ means $E_n[p]=I[p]-I_n[p]=0$ for all $p\in {\cal P}_{n-1}$.},  where ${\cal P}_{n-1}$ denotes the set of polynomials with degree less than $n-1$. If $f(x)$ has an
absolutely continuous $(k-1)$st derivative $f^{(k-1)}$ on $[-1,1]$
(if $k\ge 1$) and a $k$th derivative $f^{(k)}$ of bounded variation
$V_k$, then the quadrature error satisfies
\begin{equation}\label{errorGauss}
 E_n[f]= O(n^{-k-1}).
 \end{equation}

\end{theorem}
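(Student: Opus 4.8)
The plan is to invoke the Peano kernel representation of the quadrature error, which is the canonical tool for bounding error functionals on spaces of functions with bounded variation of a derivative. Since $I_n$ is interpolatory with degree of exactness at least $n-2$ (i.e. $E_n[\mathcal{P}_{n-1}]=0$), the error functional $E_n$ annihilates all polynomials of degree less than $n-1$. By Peano's theorem, for $f$ with absolutely continuous $f^{(k-1)}$ and $f^{(k)}$ of bounded variation, the error admits the representation
\begin{equation}
E_n[f]=\int_{-1}^1 K_k(t)\,df^{(k)}(t),
\end{equation}
where $K_k$ is the $k$th Peano kernel $K_k(t)=\tfrac{1}{k!}E_n\!\left[(x-t)_+^k\right]$, this being valid precisely because $k\le n-1$ so that the kernel is well defined and $E_n$ kills the polynomial part of the truncated power. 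Taking absolute values and using a Riemann–Stieltjes bound gives $|E_n[f]|\le V_k\,\sup_{-1\le t\le 1}|K_k(t)|$.

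The heart of the argument is therefore to produce a sharp uniform bound on the Peano kernel of the form $\sup_t|K_k(t)|=O(n^{-k-1})$. First I would record that, because the quadrature weights are positive and $E_n$ has degree of exactness $n-2$, one can estimate $K_k$ by comparing it against the kernel of an optimal (or Gauss-type) rule and exploiting positivity; this is exactly the route taken by Brass and Petras, who combine the positivity of the weights with Freud's estimates on the relevant kernel functions under the Szeg\H{o}-type weight condition $\sup_{-1\le x\le1}w(x)(1-x^2)^{1/2}<\infty$. The weight hypothesis (4.1) is what guarantees that the nodes distribute with the correct arcsine density and that the kernel decays at the optimal rate near the endpoints; without it the endpoint contributions would spoil the $n^{-k-1}$ order. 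I would invoke Freud's kernel estimates to control $|K_k(t)|$ on the bulk of $[-1,1]$ and handle the boundary layers of width $O(n^{-2})$ separately, summing the contributions to obtain the claimed order.

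The main obstacle is precisely this uniform kernel estimate: bounding $\sup_t|K_k(t)|$ by $O(n^{-k-1})$ is not a routine integration but requires the structural input that the quadrature is positive and reproduces $\mathcal{P}_{n-2}$, together with Freud's delicate pointwise bounds tied to condition (4.1). The clean degree-of-exactness bookkeeping and the Stieltjes estimate are straightforward; the delicate part is ensuring the kernel bound is uniform up to the endpoints, where the weight may blow up like $(1-x^2)^{-1/2}$. Since this entire machinery is assembled in Brass and Petras \cite{BrassPetras}, I would cite their kernel estimates directly rather than reprove them, and simply assemble the pieces—Peano representation, positivity, weight condition (4.1), and Freud's bounds—to conclude $E_n[f]=O(n^{-k-1})$.
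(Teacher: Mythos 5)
The paper gives no proof of this theorem: it is quoted verbatim from Brass and Petras, with the surrounding text indicating exactly the route you describe (Peano kernel representation plus Freud's estimates on the kernel function for positive quadrature rules under condition (4.1)). Your outline is correct and coincides with the approach the paper attributes to Brass and Petras, and like the paper you ultimately defer the hard uniform kernel bound $\sup_t|K(t)|=O(n^{-k-1})$ to their book rather than reproving it.
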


Thus, $n$-point Gauss quadrature for the weight function satisfying (4.1) has the convergence rate (4.2). Particularly, the rate (4.2) can be achieved for functions of the form of
$$f^{(k)}(x)=\left\{\begin{array}{ll}
0&\mbox{if $-1\le x\le \eta$}\\
M&\mbox{if $\eta< x\le 1$}\end{array},\quad M\not=0,\right.
$$
where $\eta$ is  chosen so that $|K_{k+1}(\eta)|=\|K_{k+1}\|_{\infty}$ and $K_{k+1}$ is the $(k+1)$th Peano kernel function (c.f Brass and Petras \cite [p. 87]{BrassPetras}). Then for this set of functions, the rate (4.2) is optimal.

However, the optimal convergence rate could be missed for such function which is of $k$th bounded variation with $\int_{-1}^1|f^{(k+1)}(x)|dx<\infty$ but the $(k+1)$th bounded variation does not exist, for example, $f_*^{(k)}(x)=\sqrt{1-x^2}$ (${\rm Var} (f_*^{(k)})<\infty$, $f_*\in X^{k+1}$), $f_{\gamma}^{(k)}(x)=|x-c|^{\gamma}$ (${\rm Var} (f_{\gamma}^{(k)})<\infty$, $f_{\gamma}\in X^{k+\gamma}$, $-1<c< 1$, $0<\gamma<1$) and
$$f_s(x)=\left\{\begin{array}{ll}
0&\mbox{if $-1\le x\le \xi$}\\
(x-\xi)_+^s&\mbox{if $\xi< x\le 1$}\end{array},\right.\,\, -1<\xi<1,
$$
where $s>0$ is a non-integer, ${\rm Var} (f_s^{(\lfloor s \rfloor)})<\infty$ and $f_s\in X^{s}$)

 In addition, the convergence rate (\ref{errorGauss}) can not be applied to the case $${\displaystyle\sup_{-1< x< 1}w(x)(1-x^2)^{1/2}=\infty}.$$

Comparing Theorem 3.5 and Theorem 4.1, we see that the convergence orders in Theorem 3.5 on the above special functions by $n$-point Clenshaw-Curtis quadrature, Fej\'{e}r's first and second rules
can be estimated  higher than those by $n$-point Gauss quadrature given in Theorem 4.1 for $w(x)=(1-x)^{\alpha}(1+x)^{\beta}$ with $\alpha,\beta\ge -\frac{1}{2}$.

Nevertheless, numerical evidence shows that for Jacobi weight $w(x)=(1-x)^{\alpha}(1+x)^{\beta}$ ($\alpha,\beta>-1$), $n$-point Gauss quadrature enjoys the same convergence rate (3.21) as that for $n$-point Clenshaw-Curtis and  Fej\'{e}r's quadrature, and is of approximately equal accuracy. For simplicity, here we only consider comparisons between Gauss and Clenshaw-Curtis quadrature for $f(x)=|x-0.5|^{s}$ ($f\in X^{s}$, $s=0.6,1.6$) and $w(x)=(1-x)^{\alpha}(1+x)^{\beta}$ with $\alpha=-0.3$ and $\beta=0.2$,  and $\alpha=-0.6$ and $\beta=-0.5$, respectively: $n=10:1000$ (see Figure 4.1). Based on these numerical evidence, we put an open problem at the end.

{\bf Open problem}. $n$-point Gauss quadrature enjoys the same convergence rate (3.21) for Jacobi weight $w(x)=(1-x)^{\alpha}(1+x)^{\beta}$ for $f\in X^s$.

\begin{figure}
\centerline{\includegraphics[height=6cm,width=14cm]{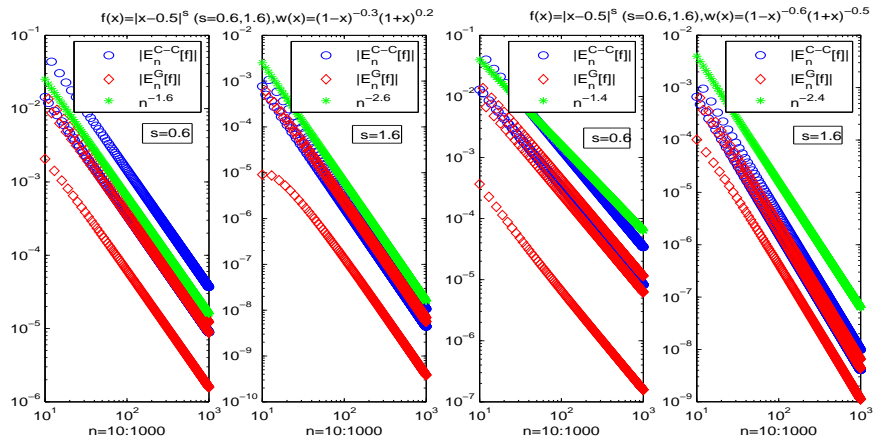}}
\caption{The absolute errors for $n$-point Gauss and Clenshaw-Curtis for $f(x)=|x-0.5|^s$ ($f\in X^s$) and $w(x)=(1-x)^{\alpha}(1+x)^{\beta}$: $n=10:1000$.}
\end{figure}

\vspace{0.36cm} {\bf Acknowledgement}. The author is grateful to
Prof. Xiaojun Chen at Hong Kong Polytechnic University for her helpful comments and kind supports.

\newpage
\baselineskip 0.50cm

\end{document}